\newtheorem{thm}{Theorem}[section]
\newtheorem{lem}[thm]{Lemma}
\newtheorem{prop}[thm]{Proposition}
\newtheorem{cor}[thm]{Corollary}
\theoremstyle{definition}
\newtheorem{dfn}[thm]{Definition}
\newtheorem{ex}[thm]{Example}
\theoremstyle{remark}
\newtheorem{remark}[thm]{Remark}
\newcommand{\CA}{{\mathcal{A}}}
\newcommand{\CE}{{\mathcal{E}}}
\newcommand{\CH}{{\mathcal{H}}}
\newcommand{\CI}{{\mathcal{I}}}
\newcommand{\CJ}{{\mathcal{J}}}
\newcommand{\CS}{{\mathcal{S}}}
\newcommand{\CL}{{\mathcal{L}}}
\newcommand{\CB}{{\mathcal{B}}}
\newcommand{\CR}{{\mathcal{R}}}
\newcommand{\af}{\alpha}
\newcommand{\bt}{\beta}
\newcommand{\gm}{\gamma}
\newcommand{\dt}{\delta}
\newcommand{\ep}{\varepsilon}
\newcommand{\ld}{\lambda}
\newcommand{\Om}{\Omega}
\newcommand{\N}{{\mathbb{N}}}
\newcommand{\T}{{\mathbb{T}}}
\newcommand{\reg}{{\operatorname{reg}}}
\begin{document}


\title[The simplicity of the  $C^*$-algebras associated to arbitrary labeled spaces]
{The simplicity of the $C^*$-algebras associated to arbitrary labeled spaces}

\author[E. J. Kang]{Eun Ji Kang}
\thanks{This research was supported by Basic Science Research Program through the National Research Foundation of Korea funded by the Ministry of Education
 (NRF-2020R1I1A1A01072970). 
}
\address{Research Institute of Mathematics, Seoul National University, Seoul 08826, 
Korea} \email{kkang3333\-@\-gmail.\-com. }

\subjclass[2000]{37B40, 46L05, 46L55}

\keywords{Labeled graph $C^*$-algebras,  simple $C^*$-algebras, gauge-invariant ideals, generalized Boolean dynamical systems}

\subjclass[2000]{46L05, 46L55}
\begin{abstract}   In this paper, we consider the simplicity of the $C^*$-algebra associated to an arbitrary weakly left-resolving  labeled space $(E,\CL,\CE)$, where  $\CE$ is the smallest non-degenerate accommodating set.  We classify all gauge-invariant ideals of  $C^*(E, \CL,\CE)$ and 
 characterize    minimality of $(E, \CL,\CE)$ in terms of ideal structure of $C^*(E, \CL,\CE)$. Using these results, we prove that 
$C^*(E, \CL,\CE)$ is simple if and only if 
   $(E, \CL,\CE)$ is strongly cofinal and satisfies Condition (L), and 
for any  $ A \in \CE \setminus \{\emptyset\}$ and $B \in \CE$, there is  $C \in \CE_{reg}$ such that  $B \setminus C\in \CH(A)$, and if and only if  $(E, \CL,\CE)$ is minimal and satisfies Condition (L), and if and only if  $(E, \CL,\CE)$ is minimal and satisfies Condition (K).
\end{abstract}

\maketitle

\setcounter{equation}{0}
\section{Introductions}


 A class of $C^*$-algebras $C^*(E)$ associated with directed graphs $E$ was  introduced in \cite{FW, KPRR} as a generalization of the Cuntz-Krieger algebras and there has been  various generalizations of graph $C^*$-algebras.
The $C^*$-algebras associated to ultragraphs \cite{To1}, higher-rank graphs \cite{KP}, subshifts, labeled spaces \cite{BP1}, Boolean dynamical systems \cite{COP} are those generalizations, and generalized Boolean dynamical systems was introduced in \cite{CaK2} to unify $C^*$-algebras of labeled spaces and $C^*$-algebras of Boolean dynamical systems. 
  Among others, we  focus on the $C^*$-algebras associated to arbitrary weakly left-resolving normal labeled spaces. Throughout the paper,  by a labeled space we always mean a weakly left-resolving normal labeled space.
  
  The ideal structure of  $C^*$-algebras of set-finite, receiver set-finite labeled spaces $(E, \CL,\CB)$ with  $E$ having no sinks is now well understood. It is known in \cite[Theorem 5.2]{JKP} that the gauge-invariant ideals of $C^*(E, \CL,\CB)$ are in 
     one-to-one correspondence with the hereditary saturated subsets of $\CB$.   
         We first generalize this results to $C^*$-algebras  of arbitrary labeled spaces. We show that    there is a one-to-one correspondences between
gauge-invariant ideals of $C^*(E, \CL,\CB)$ and
pairs $(\CH,\CS)$ where $\CH$ is a hereditary saturated ideal of
$\CB$ and $\CS$ is an ideal of $\{A\in\CB:
r(A, \af)\in\CH\text{ for all but finitely many }\alpha\}$ such
that $\CH\cup\CB_{reg}\subseteq\CS$ (Theorem \ref{gauge invariant ideal:characterization}). 
 A quotient labeled space was also introduced in \cite[Definition 3.2]{JKP} to 
 realize the quotient algebra $C^*(E, \CL,\CB)/I$ by a gauge-invariant ideal $I$ 
as a $C^*$-algebras of a quotient labeled space. 
    But, a quotient labeled space  is not a labeled space in general, but a Boolean dynamical system. 
        So, in this paper we realize the quotient of $C^*(E, \CL,\CB)$ by $I_{(\CH,\CS)}$ as a $C^*$-algebra of a relative generalized Boolean dynamical system  instead of newly defining $C^*$-algebras of  relative quotient labeled spaces  of arbitrary labeled spaces.
Precisely, we show  that the quotient of  $C^*(E, \CL,\CB)$
by the ideal  $I_{(\CH,\CS)}$ is isomorphic to the
$C^*$-algebra of relative generalized Boolean dynamical system 
 $(\CB/\CH,\CA,\theta,[\CI_{r(\af)}];[\CS])$ (Proposition \ref{isomorphism to quotient}).
These will be easily done by viewing labeled graph $C^*$-algebras as $C^*$-algebras of generalized Boolean dynamical systems and applying results of \cite{CaK2}.

The second goal of the paper is to  investigate the question of when $C^*(E, \CL, \CE)$  is simple, where  $(E, \CL,\CE)$ is an arbitrary weakly left-resolving   labeled space  with  $\CE$ is the smallest non-degenerate accommodating set. 
For an arbitrary graph $E$,  we recall that $E$ is said to satisfy Condition $(L)$ if every loop has as exit, and is said to be cofinal if every vertex connects to every infinite path. 
Then the simplicity of  $C^*(E)$ is characterized as follows.

 \begin{thm}\label{graph case}(\cite[Corollary 2.15]{DT}) Let $E$ be a directed graph.
Then the following are equivalent.
\begin{enumerate}
\item $C^*(E)$ is simple.
\item  The following properties hold:
\begin{enumerate}
\item $E$ is cofinal,
\item $E$ satisfies Condition $(L)$, and 
\item for $w \in E^0$ and $v \in E^0_{sing}$, there is a path $\af \in E^*$ such that $s(\af)=w$ and $r(\af)=v$.
\end{enumerate}
\item  $E$ satisfies Condition (L) and $E^0$ has no proper hereditary  saturated subsets.
\item  $E$ satisfies Condition (K) and $E^0$ has no proper hereditary  saturated subsets.
\end{enumerate}
 \end{thm}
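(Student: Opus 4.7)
The plan is to prove the equivalences by establishing the chain $(1)\Rightarrow(3)\Rightarrow(2)\Rightarrow(1)$ together with $(3)\Leftrightarrow(4)$. My two essential tools are: (a) the Bates--Hong--Raeburn--Szyma\'nski classification of gauge-invariant ideals of $C^*(E)$ by admissible pairs $(H,B)$, where $H\subseteq E^0$ is hereditary and saturated and $B$ is a set of breaking vertices for $H$; and (b) the Cuntz--Krieger uniqueness theorem, which guarantees that whenever $E$ satisfies Condition (L), any $*$-homomorphism of $C^*(E)$ that is nonzero on every vertex projection $p_v$ is faithful.

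For $(1)\Rightarrow(3)$, if $C^*(E)$ is simple then no proper nonempty hereditary saturated $H\subsetneq E^0$ can exist, since the ideal $I_H$ generated by $\{p_v:v\in H\}$ would be a nontrivial proper ideal; likewise, a loop without exit would produce a nontrivial ideal in a corner (isomorphic to a matrix algebra over $C(\T)$), so Condition (L) must hold. For $(3)\Rightarrow(2)$, the absence of proper hereditary saturated subsets is equivalent, by a purely graph-theoretic argument, to cofinality of $E$ together with clause (c): the set of vertices not downward-reachable from any given infinite path, and from any given singular vertex, is always hereditary and saturated, so nontriviality of either would contradict (3). For $(2)\Rightarrow(1)$, starting with a nonzero ideal $I$ of $C^*(E)$, I would use Condition (L) and the Cuntz--Krieger uniqueness theorem to locate some vertex projection $p_v\in I$; cofinality and clause (c) then let me trace backward along edges (and through singular vertices) to obtain every $p_w\in I$, forcing $I=C^*(E)$.

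For $(3)\Leftrightarrow(4)$, the direction $(4)\Rightarrow(3)$ is immediate since Condition (K) implies Condition (L). For the converse $(3)\Rightarrow(4)$, I argue by contraposition: if a vertex $v$ sits on a loop $\alpha$ admitting no second return path at $v$, then the collection of vertices not reachable from $v$ except through iterates of $\alpha$ determines a proper nontrivial hereditary saturated subset of $E^0$, contradicting (3).

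The main technical obstacle throughout will be the careful handling of singular vertices (sinks and infinite emitters). In non-row-finite graphs, gauge-invariant ideals are indexed by pairs $(H,B)$ rather than by $H$ alone, and one must rule out nontrivial breaking-vertex contributions; this is exactly where clause (c) of (2) becomes essential, as it ensures every singular vertex is reachable from every other vertex, which combined with cofinality forces the breaking-vertex set to be empty in any admissible configuration and thus yields no extra gauge-invariant ideals beyond those parametrised by $H\subseteq E^0$.
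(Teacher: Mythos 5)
First, a framing remark: the paper does not prove this theorem at all --- it is quoted as background from Drinen and Tomforde (their Corollary 2.15) --- so there is no in-paper argument to compare against and your proposal has to stand on its own. Your cycle $(1)\Rightarrow(3)\Rightarrow(2)\Rightarrow(1)$ is the standard route and is essentially sound: $I_H$ is a nonzero proper ideal for $H$ a proper nonempty hereditary saturated set; a loop without exit yields a hereditary subalgebra isomorphic to $M_n(C(\T))$, which cannot sit inside a simple algebra; for an infinite path $x$ (resp.\ a singular vertex $v$) the set of vertices that do not connect to $x$ (resp.\ to $v$) is hereditary and saturated, giving $(3)\Rightarrow(2)$; and the Cuntz--Krieger uniqueness theorem under Condition (L) places a vertex projection in any nonzero ideal, after which cofinality and clause (c) force the set $\{w: p_w\in I\}$ to be all of $E^0$. (On that route the breaking-vertex bookkeeping you worry about in your last paragraph is not actually needed: Cuntz--Krieger uniqueness applies to arbitrary ideals, not just gauge-invariant ones.)

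The genuine gap is in $(3)\Rightarrow(4)$. Your contrapositive asserts that the failure of Condition (K) at a vertex $v$ by itself produces a proper \emph{nontrivial} hereditary saturated subset. That is false: take $E$ with a single vertex $v$ and a single edge from $v$ to $v$. Condition (K) fails, yet $E^0=\{v\}$ has no proper nonempty hereditary saturated subsets --- here $(3)$ fails only because Condition (L) fails. In this example the set you describe (``vertices not reachable from $v$ except through iterates of $\alpha$'') is empty, so it cannot do the job, and no choice of set can, since the implication ``not (K) $\Rightarrow$ existence of a proper nontrivial hereditary saturated subset'' is simply untrue. The correct argument must invoke Condition (L), which your sketch never uses in this step. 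Namely: assume (L) and that $E^0$ has no proper nontrivial hereditary saturated subsets, and suppose $v$ lies on a unique return path $\alpha$. The set $H=\{w\in E^0: w \text{ does not connect to } v\}$ is hereditary and saturated and omits $v$, hence is empty, so every vertex connects to $v$. By (L) the loop $\alpha$ admits an exit, i.e.\ an edge $e$ with $s(e)=s(\alpha_i)$ and $e\neq\alpha_i$ for some $i$; since $r(e)$ connects to $v$, the path $\alpha_1\cdots\alpha_{i-1}e$ followed by a path from $r(e)$ to $v$ is a second, distinct return path at $v$, a contradiction. Without this use of (L) the step collapses.
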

 \noindent
Many authors paid a great deal of attention to extend this result to the $C^*$-algebras associated to set-finite and receiver set-finite labeled spaces $(E,\CL,\CE)$ with $E$ having no sinks or sources. 
 In \cite[Definition 6.1]{BP2} Bates and Pask   introduced a notion of cofinality  appropriate for labeled spaces. 
  In \cite[Definition 3.1]{JK}, a notion of  strong cofinality of  labeled spaces was given to modify minor mistake of results in \cite{BP2}. Then again  a modified version of strong cofinality of a labeled space was introduced in \cite[Definition 2.10]{JP2018}. 
      As an analogue of Condition (L) of usual directed graph, the notion of a disagreeable labeled space was introduced in  \cite[Definition 5.1]{BP2}. On the other hand, the notion of cycle was introduced in \cite[Definition 9.5]{COP} to define 
condition $(L)$ for a labeled space 
(more generally for 
Boolean dynamical systems) which can be regarded as another condition  analogues  
 to Condition  (L) for usual directed graphs. It then is known in  \cite[Proposition 3.7]{JKaP}  that if  $(E,\CL,\CB)$ is disagreeable, then  $(E,\CL,\CB)$  satisfies Condition (L). But, the converse is not true, in general (\cite[Proposition 3.2]{JP2018}). 
   Based on these concepts,  
 it is eventually known  in \cite[Theorem 3.17]{JP2018} that for  a set-finite and receiver set-finite  labeled space $(E,\CL,\CE)$ with $E$ having no sinks or sources,  
$C^*(E,\CL,\CE)$ is simple if and only if  $(E,\CL,\CE)$ is strongly cofinal in the sense of \cite[Definition 2.10]{JP2018} and disagreeable.  
It is also prove in  \cite[Theorem 3.17] {JP2018} that for a labeled space whose Boolean dynamical system satisfies a sort of domain condition, $C^*(E,\CL,\CE)$ is simple if and only if $(E,\CL,\CE)$ satisfies Condition (L) and there are no nonempty hereditary saturated subsets of $\CE$.

 We generalize these results to labeled graph $C^*$-algebras  associated to arbitrary  labeled spaces.
  We first  give an example that shows why we need to change the definition of strong cofinality given in  \cite[Definition 3.1]{JK} to \cite[Definition 2.10]{JP2018}.
We next  prove   that 
  $(E,\CL,\CE)$ is strongly cofinal and for any  $ A \in \CE \setminus \{\emptyset\}$ and $B \in \CE$, there is  $C \in \CE_{reg}$ such that  $B \setminus C\in \CH(A)$  if and only if  
$(E,\CL,\CE)$ is minimal, in the sense that  $\{\emptyset\}$ and $\CE$ are the only hereditary saturated subsets of $\CE$.  
We also identify the minimal condition with the property that  an ideal containing one vertex projection must  be the whole $C^*$-algebra (Theorem \ref{equiv:minimal}).
It will be also shown that if  $(E, \CL,\CE)$ is minimal, then  $(E, \CL,\CE)$ is  disagreeable if and only if 
$(E, \CL,\CE)$   satisfies Condition (L) (Lemma \ref{minimal:prop3}).
As a result, we have our main results as follows. It is 
 a generalization of 
Theorem \ref{graph case}.

 \begin{thm}(Theorem \ref{equiv:simple})  Let $(E,\CL,\CE)$ be a labeled space. 
Then the following are equivalent.
\begin{enumerate}
\item $C^*(E,\CL,\CE)$ is simple.
\item  $(E, \CL,\CE)$ is minimal and  satisfies Condition (L).
\item  $(E, \CL,\CE)$ is minimal and  satisfies Condition (K).
\item The following properties hold:
\begin{enumerate}
\item $(E, \CL,\CE)$ is strongly cofinal,
\item $(E, \CL,\CE)$ satisfies Condition (L), and 
\item for any  $ A \in \CE \setminus \{\emptyset\}$ and $B \in \CE$, there is  $C \in \CE_{reg}$ such that  $B \setminus C\in \CH(A)$. 
\end{enumerate}
\item The following properties hold:
\begin{enumerate}
\item $(E, \CL,\CE)$ is strongly cofinal,
\item $(E, \CL,\CE)$ is disagreeable, and 
\item for any  $ A \in \CE \setminus \{\emptyset\}$ and $B \in \CE$, there is  $C \in \CE_{reg}$ such that  $B \setminus C\in \CH(A)$. 
\end{enumerate}
\end{enumerate}
 \end{thm}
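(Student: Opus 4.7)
The plan is to cycle through the equivalences as $(1) \Leftrightarrow (3) \Leftrightarrow (2) \Leftrightarrow (4) \Leftrightarrow (5)$, with the work split between the purely combinatorial equivalences among $(2)$, $(3)$, $(4)$, $(5)$ on the structural side and the analytic implication $(1) \Leftrightarrow (3)$ that connects simplicity of $C^*(E,\CL,\CE)$ to the structural conditions.

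For the combinatorial side, $(4) \Leftrightarrow (2)$ is essentially Theorem \ref{equiv:minimal}, which identifies minimality with the conjunction of strong cofinality and the regular-cover condition. The equivalence $(5) \Leftrightarrow (2)$ follows by combining Theorem \ref{equiv:minimal} with Lemma \ref{minimal:prop3}, since that lemma says Condition (L) and disagreeability coincide whenever the labeled space is minimal. For $(3) \Leftrightarrow (2)$, the forward implication is immediate because Condition (K) is by definition stronger than Condition (L); for the reverse, assuming minimality and Condition (L), I would suppose Condition (K) fails at some $A \in \CE$ and show that the set of tails of admissible paths from $A$ that avoid a distinguished offending base cycle generates a proper hereditary saturated subset of $\CE$, contradicting minimality.

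For the analytic bridge, the implication $(1) \Rightarrow (2)$ is direct from Theorem \ref{gauge invariant ideal:characterization}: simplicity forbids nontrivial gauge-invariant ideals, so the only admissible pairs $(\CH,\CS)$ are the two extremes, which forces $\CE$ to be minimal. Condition (L) must also hold, for otherwise a cycle without exit would, via Proposition \ref{isomorphism to quotient}, exhibit a quotient of a distinguished corner isomorphic to $C(\T) \otimes \CK$ (or an analogous non-simple piece), contradicting simplicity. For the converse $(3) \Rightarrow (1)$, the key input is that under Condition (K) every ideal of $C^*(E,\CL,\CE)$ is gauge-invariant; combined with minimality and Theorem \ref{gauge invariant ideal:characterization} this leaves only the trivial ideals. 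I would obtain gauge-invariance of ideals by iterating Proposition \ref{isomorphism to quotient} — which presents each quotient by a gauge-invariant ideal as the $C^*$-algebra of a relative generalized Boolean dynamical system — and applying a Cuntz--Krieger type uniqueness theorem on each quotient, using that Condition (K) descends to all such quotients.

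The main obstacle I anticipate is precisely the step $(3) \Rightarrow (1)$: one must carefully track that Condition (K) is preserved under passage to every quotient by a gauge-invariant ideal in the language of relative generalized Boolean dynamical systems, and then invoke a gauge-invariant uniqueness theorem in the full generality of an arbitrary weakly left-resolving labeled space, with no set-finiteness or receiver-set-finiteness hypotheses available. The subsidiary step $(2) \Rightarrow (3)$ is also delicate, because the absence of such finiteness hypotheses makes cycles in the labeled sense interact with the accommodating set $\CE$ in ways that the classical graph argument, where one can simply localize at a vertex, does not directly handle.
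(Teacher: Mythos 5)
Your combinatorial equivalences $(2)\Leftrightarrow(4)\Leftrightarrow(5)$ match the paper (Theorem \ref{equiv:minimal} and Lemma \ref{minimal:prop3}), and your $(1)\Rightarrow(2)$ is in the right spirit, though the paper's mechanism for the Condition (L) half is more concrete: a cycle $(\af,A)$ with no exits sits on a minimal set by Proposition \ref{cycle at a minimal set}, which yields a hereditary subalgebra isomorphic to $M_{|\af|}(C(\T))$, contradicting simplicity; your ``non-simple quotient of a corner via Proposition \ref{isomorphism to quotient}'' is not backed by anything in the paper and would need its own argument.

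The genuine gap is your analytic bridge back to simplicity. You route it through $(3)\Rightarrow(1)$ and rest the whole implication on the claim that under Condition (K) every ideal of $C^*(E,\CL,\CE)$ is gauge-invariant. That claim is nowhere established in the paper, and your sketch for it (iterate Proposition \ref{isomorphism to quotient} and apply a uniqueness theorem on each quotient) requires a Cuntz--Krieger uniqueness theorem for \emph{relative} generalized Boolean dynamical systems together with the fact that Condition (K) forces Condition (L) on every quotient $(\CB/\CH,\CA,\theta,[\CI_{r(\af)}];[\CS])$ --- Lemma \ref{LK} only covers the non-relative quotient Boolean dynamical systems. This is a substantial detour with unproven inputs. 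The paper avoids it entirely by proving $(2)\Rightarrow(1)$ directly: if $I$ is a nonzero ideal and Condition (L) holds, the Cuntz--Krieger uniqueness theorem (Theorem \ref{CKUT}) forces $I$ to contain some $p_A$ with $A\neq\emptyset$ (otherwise the quotient map would be injective, making $I=0$), and then Theorem \ref{equiv:minimal}(4) gives $I=C^*(E,\CL,\CE)$. You should use this route; it makes the hard step two lines. Relatedly, your reverse direction of $(2)\Leftrightarrow(3)$ --- constructing a proper hereditary saturated set from a failure of Condition (K) --- is vague and unnecessary: under minimality the only hereditary saturated subsets are $\{\emptyset\}$ and $\CE$, so Lemma \ref{LK} reduces Condition (K) to Condition (L) for the original space immediately.
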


\noindent
    As a corollary, we show  for a set-finite labeled space $(E, \CL,\CE)$ with $E$ having no sinks,  that $C^*(E, \CL,\CE)$ is simple if and only if 
   $(E, \CL,\CE)$ is strongly cofinal and is disagreeable, if and only if $(E, \CL,\CE)$ is strongly cofinal and satisfies Condition (L), if and only if $(E, \CL,\CE)$ is minimal and satisfies Condition (L), and if and only if $(E, \CL,\CE)$ is minimal and satisfies Condition (K).  This generalizes \cite[Theorem 3.7]{JP2018}.

 This paper is organized as follows.  In Section 2 we review basic definitions and terminologies needed for the rest of the paper.   In Section 3 we classify the gauge-invariant ideals in the $C^*$-algebras of arbitrary labeled spaces and describe the quotients as $C^*$-algebras of relative generalized Boolean dynamical systems.
 In Section 4 we examine strong cofinality, minimality and disagreeablity for an arbitrary labeled space, and prove simplicity results for $C^*$-algebras of arbitrary labeled spaces.

\section{Preliminary} 
\subsection{Directed graphs}

A  {\it directed graph} $E=(E^0,E^1,r,s)$
consists of two countable sets of
vertices  $E^0$ and edges $E^1$,
and the range, source maps $r$, $s: E^1\to E^0$. 
A {\it path of length $n$} is a sequence $\ld=\ld_1 \cdots \ld_n$ of edges
  such that $r(\ld_{i})=s(\ld_{i+1})$ for $ 1\leq i\leq n-1$.
We write $|\ld| = n$ for the length of $\ld $ and the vertices in $E^0$ are regarded as finite paths of length zero. By $E^n$  we mean the set of all  paths of length $n$. 
The maps $r,s$ naturally extend to the set 
$E^{\geq 0}:=\cup_{n \geq 0} E^n$ of all finite paths, 
where $r(v)=s(v)=v$ for $v \in E^0$. 
We denote by  $E^{\infty}$ the set of all infinite paths 
$x=\ld_{1}\ld_{2}\cdots$, $\ld_{i}\in E^1$ with 
$r(\ld_i)=s(\ld_{i+1})$ for $i\geq 1$, 
and define $s(x):= s(\ld_1)$. We als use notation like 
$E^{\leq n}$  and $E^{\geq n}$ which should have their obvious meaning.

A vertex $v\in E^0$ 
 is called a {\it source} if $|r^{-1}(v)|=0$ and $v$
  is called a {\it sink } if $|s^{-1}(v)|=0$, and $v$ is called an {\it infinite-emitter} if $|s^{-1}(v)|=\infty$. We define $E^0_{sink}$ to be the set of all sinks in $E^0$. We let $E^0_{reg}=\{v \in E^0:0 < |s^{-1}(v)| < \infty \}$ and let $E^0_{sing}=E^0 \setminus E^0_{reg}$. 
 
 A finite path $\ld=\ld_1 \cdots \ld_{|\ld|}\in E^{\geq 1}$ with $r(\ld)=s(\ld)$ is 
called a {\it loop}, and  
an {\it exit} of a loop $\ld$ is a path 
$\dt\in E^{\geq 1}$ such that  
$|\dt|\leq |\ld|,\ s(\dt)=s(\ld), \text{ and } \dt\neq \ld_1 \cdots \ld_{|\dt|}$. 
A graph $E$ is said to satisfy {\it Condition} (L)  
if every loop has an exit and $E$ is said to satisfy {\it Condition} (K) 
if every vertex $v \in E^0$ lies on no loops, or if there are two loops $\af$ and $\bt$ such that $s(\af)=s(\bt)=v$ and neither $\af$ nor $\bt$ is an initial path of the other.

\subsection{Labeled spaces}

A {\it labeled graph} $(E,\CL)$ over a countable alphabet  $\CA$
 consists of a directed graph $E$ and
 a {\it labeling map} $\CL:E^1\to \CA$.
 We assume that the map $\CL$ is onto. 
By $\CA^*$ and $\CA^\infty$, 
 we denote respectively the sets of all finite words
  and infinite words in symbols of $\CA$.   
 To each finite path $\ld=\ld_1\cdots \ld_n\in E^n$, there corresponds 
 a finite labeled path 
 $\CL(\ld):=\CL(\ld_1)\cdots \CL(\ld_n)\in \CL(E^n)\subset \CA^*$, and  
 similarly an infinite labeled path 
 $\CL(x):=\CL(\ld_1)\CL(\ld_2)\cdots\in \CL(E^\infty)\subset\CA^\infty$  
 to each infinite path $x=\ld_1\ld_2\cdots \in E^\infty$.  
We use notation $\CL^*(E):=\CL(E^{\geq 1})$, where $E^{\geq 1}=\cup_{n \geq 1} E^n$. We denote the subpath $\af_i\cdots \af_j$ of 
$\af=\af_1\af_2\cdots\af_{|\af|}\in \CL(E^{\geq 1})$
by $\af_{[i,j]}$ for $1\leq i\leq j\leq |\af|$.
A subpath of the form $\af_{[1,j]}$ is called
an {\it initial path} of $\af$. 
The {\it range} $r(\af)$ 
 of a labeled path $\af\in \CL^*(E)$ is a 
 subset of $E^0$ defined by
  $$ r(\af) =\{r(\ld) \,:\, \ld\in E^{\geq 1},\,\CL(\ld)=\af\}.$$
 The {\it relative range of $\af\in \CL^*(E)$
 with respect to $A \subset E^0$} is defined to be
$$
 r(A,\af)=\{r(\ld)\,:\, \ld\in E^{\geq 1},\ \CL(\ld)=\af,\ s(\ld)\in A\}.
$$

Let   $\CB\subseteq 2^{E^0}$ be a collection of subsets of $E^0$. We say $\CB$
is 
 {\it closed under relative ranges} for $(E,\CL)$ if $r(A,\af)\in \CB$ for all $A\in \CB$ and $\af\in  \CL^*(E)$.
 We call $\CB$ an {\it accommodating set} for $(E,\CL)$
 if it satisfies
 \begin{enumerate}
 \item[(i)] $r(\af) \in \CB$ for all $\af\in  \CL^*(E)$,
 \item[(ii)] it is closed under relative ranges,
 \item[(iii)] it is closed under finite intersections and unions.
  \end{enumerate} 
  \noindent If, in addition, $\CB$  is closed under relative complements, then  $\CB$ is said to be {\it non-degenerate}.    
  The triple $(E,\CL,\CB)$ is called  a {\it labeled space} when 
$\CB$ is accommodating for $(E,\CL)$.
  Moreover, if  $\CB$ is non-degenerate, then $(E,\CL, \CB)$ is called  {\it normal} as in \cite{BCP}. By $\CE$, we denote the smallest non-degenerate accommodating set 
for a labeled graph $(E,\CL)$. 

A labeled space $(E,\CL,\CB)$ is said to be {\it weakly left-resolving} if it satisfies
 $$r(A,\af)\cap r(B,\af)=r(A\cap B,\af)$$
  for all $A,B\in \CB$ and  $\af\in \CL^*(E)$.
  A labeled graph $(E,\CL)$ is {\it left-resolving} if 
  $\CL : r^{-1}(v) \rightarrow \mathbf{\CA}$ is injective 
 for each $v \in E^0$.
Left-resolving labeled spaces are weakly left-resolving.

\vskip 1pc
\noindent
{\bf Assumptions.}\label{assumptions-sink} In this paper, $(E,\CL,\CB)$  is always weakly left-resolving normal and $\CL: E^1 \to \CA$ is  onto.

\vskip 1pc

 For $A,B \subset E^0$ and $n\geq 1$, let
 $$ AE^n =\{\ld\in E^n\,:\, s(\ld)\in A\},\ \
  E^nB=\{\ld\in E^n\,:\, r(\ld)\in B\}.$$
  A labeled space $(E,\CL,\CB)$ is said to be {\it set-finite}
 ({\it receiver set-finite}, respectively) if for every $A\in \CB$ and $n \geq 1$ 
 the set  $\CL(AE^n)$ ($\CL(E^nA)$, respectively) is finite.   
   We also say that $A \in \CB$ is {\it regular} if $0 < |\CL(BE^1)| < \infty$ for any $\emptyset \neq B \in \CB$ with $B \subseteq A$. If $A \in \CB$  is not regular, then it is called a {\it singular} set. We write $\CB_{reg}$ for the set of all regular sets.  
Note that if $E$ has no sinks, then $(E, \CL,\CB)$ is set-finite if and only if $\CB=\CB_{reg}$. 
A set $A \in \CB$ is called {\it minimal} (in $\CB$) if $A \cap B $ is either $A$
  or $\emptyset$ for all $B \in \CB$. 
  
Let $(E,\CL)$ be a labeled graph and 
let $\Om_0(E)$ be the set of all vertices that are not sources, and let $l\geq 1$.
The relation $\sim_l$ on  $\Om_0(E)$, given by 
$v\sim_l w$ if and only of $\CL(E^{\leq l} v)=\CL(E^{\leq l} w)$, is  
an equivalence relation, and the equivalence class 
$[v]_l$ of $v\in \Om_0(E)$ is called a {\it generalized vertex}. 
  If $k>l$,  then $[v]_k\subset [v]_l$ is obvious and
   $[v]_l=\cup_{i=1}^m [v_i]_{l+1}$
   for some vertices  $v_1, \dots, v_m\in [v]_l$ (\cite[Proposition 2.4]{BP2}). 
The generalized vertices of labeled graphs play the role of vertices in usual graphs.  


\subsection{$C^*$-algebras of labeled spaces} We review the definition of the $C^*$-algebras associated to labeled spaces 
from \cite{BCP}.

\begin{dfn}\label{rep}(\cite[Definition 2.1]{BCP}) Let $(E, \CL, \CB)$ be a labeled space. 
A {\it representation} of $(E,\CL,\CB)$
is a family of projections $\{p_A\,:\, A\in \CB\}$ and
partial isometries 
$\{s_\af \,:\, \af\in \CA\}$ such that for $A, B\in \CB$ and $\af,\bt\in \CA$,
\begin{enumerate}
\item[(i)]  $p_{\emptyset}=0$, $p_{A\cap B}=p_A p_B$, and
$p_{A\cup B}=p_A+p_B-p_{A\cap B}$,
\item[(ii)] $p_A s_\af=s_\af p_{r(A,\af)}$,
\item[(iii)] $s_\af^*s_\af=p_{r(\af)}$ and $s_\af^* s_\bt=0$ unless $\af=\bt$,
\item[(iv)]\label{CK4}  $p_A=\sum_{\af \in \CL(AE^1)} s_\af p_{r(A,\af)}s_\af^*$ for $A \in \CB_{reg}$. 
\end{enumerate}
\end{dfn}

\begin{remark} 
 Let $(E, \CL, \CB)$ be a  labeled space. 
  \begin{enumerate}
 \item For any $A \in \CB$, the condition 
$$ |\CL(AE^1)| < \infty ~\text{and}~A \cap B = \emptyset ~\text{for all}~ B \in \CB ~\text{satisfying}~  B \subseteq E^0_{sink}$$
is equivalent to $A \in \CB_{reg}$. Thus, the condition (iv) in Definition \ref{rep} is equivalent to (iv) in \cite[Definition 2.1]{BCP}.
\item If $E$ has no sinks,   the condition (iv) in Definition \ref{rep} is equivalent to
$$p_A=\sum_{\af \in \CL(AE^1)} s_\af p_{r(A,\af)}s_\af^* ~\text{for }~ A \in \CB ~\text{with}~|\CL(AE^1)| < \infty.$$
\end{enumerate}
\end{remark}

Given a  labeled space  $(E,\CL,\CB)$,  it is known in \cite[Theorem 3.8]{BCP} that 
there exists a $C^*$-algebra $C^*(E,\CL,\CB)$ generated by 
a universal representation $\{s_\af,p_A\}$ of $(E,\CL,\CB)$. 
We call $C^*(E,\CL,\CB)$ the {\it labeled graph $C^*$-algebra} of
a labeled space $(E,\CL,\CB)$ 
and simply write $C^*(E,\CL,\CB)=C^*(s_\af,p_A)$ 
to indicate the generators. 
Note that   $s_\af\neq 0$ and $p_A\neq 0$ for $\af \in \CA$
and  $A\in \CB$, $A\neq \emptyset$.

\vskip 1pc

\begin{remark} \label{rmk1}
Let $(E,\CL,\CB)$ be a labeled space. For notational convenience, we use a symbol  $\epsilon$ such that 
$r(\epsilon) =E^0$, $r(A, \epsilon) = A$ for all $A \subset E^0$,
and $\af=\epsilon\af=\af\epsilon$ for all $\af\in \CL(E^{\geq 1})$, and write 
$\CL^\#(E):=\CL(E^{\geq 1})\cup \{\epsilon \}.$ 
For  $\ep \in \CL^{\#}(E)$, let $s_\ep$ denote the unit of the multiplier algebra of $C^*(E,\CL,\CB)$. 

\begin{enumerate}
\item 
We have the following equality  
$$(s_\af p_{A} s_\bt^*)(s_\gm p_{B} s_\dt^*)=\left\{
                      \begin{array}{ll}
                        s_{\af\gm'}p_{r(A,\gm')\cap B} s_\dt^*, & \hbox{if\ } \gm=\bt\gm' \\
                        s_{\af}p_{A\cap r(B,\bt')} s_{\dt\bt'}^*, & \hbox{if\ } \bt=\gm\bt'\\
                        s_\af p_{A\cap B}s_\dt^*, & \hbox{if\ } \bt=\gm\\
                        0, & \hbox{otherwise,}
                      \end{array}
                    \right.
$$                    
for $\af,\bt,\gm,\dt\in  \CL^{\#}(E)$ and $A,B\in \CB$ (see \cite[Lemma 4.4]{BP1}). 
Since 
$s_\af p_A s_\bt^*\neq 0$ if and only if $A\cap r(\af)\cap r(\bt)\neq \emptyset$, 
it follows that  
\begin{eqnarray}\label{eqn-elements}
\hskip 3pc 
C^*(E,\CL,\CB)=\overline{\rm span}\{s_\af p_A s_\bt^*\,:\,
\af,\,\bt\in  \CL^{\#}(E) ~\text{and}~ A \subseteq r(\af)\cap r(\bt)\}. 
\end{eqnarray}  

\item By universal property of  $C^*(E,\CL,\CB)=C^*(s_\af, p_A)$, there is a strongly continuous action $\gm:\mathbb{T} \rightarrow Aut(C^*(E,\CL,\CB))$,  
called the  {\it gauge action}, such that
$$\gm_z(s_\af)=zs_\af ~ \text{ and } ~ \gm_z(p_A)=p_A$$
for $\af\in \CA$ and $A\in \CB$. 

\end{enumerate}
\end{remark}

The notion of cycle was introduced (\cite[Definition 9.5]{COP}) to define 
condition $(L)$ for a labeled space $(E,\CL,\CB)$ 
(more generally for 
Boolean dynamical systems) which is an analogue   
 to Condition  (L) for usual directed graphs. 
 
\begin{dfn} (\cite[Definition 9.5]{COP}) Let $(E,\CL,\CB)$ be a labeled space. 
\begin{enumerate}
\item A pair  $(\af,A)$ with $\af \in \CL^*(E)$ and $ \emptyset \neq A \in \CB$ is a  {\it cycle} if 
$B=r(B,\af)$ for every  $B\in \CB$ with  $B \subseteq A$.
\item A cycle $(\af,A)$ has an \emph{exit} if for there is a $t \leq |\af|$ and a $B\in\CB$ such that $\emptyset \neq B \subseteq r(A, \af_{[1,t]})$ and $\CL(BE^1)\ne\{\af_{t+1}\}$ (where $\af_{|\af|+1}:=\af_1$).
\item A cycle $(\af,A)$ has no exits if for all $t \leq |\af|$ and all $\emptyset \neq B \subseteq r(A, \af_{[1,t]})$, we have $B \in \CB_{reg}$ with $\CL(BE^1)=\{\af_{t+1}\}$ (where $\af_{|\af|+1}:=\af_1$).
\item A labeled space $(E,\CL,\CB)$ satisfies  
{\it Condition $(L)$} if every cycle has an exit.
\end{enumerate}
\end{dfn}

\begin{thm}\label{CKUT}{\rm (The Cuntz-Krieger Uniqueness theorem \cite[Theorem 5.5]{BP1}, \cite[Theorem 9.9]{COP})} Let  $\{t_a, q_A\}$ be a representation of a labeled space $(E,\CL,\CB)$ 
such that $q_A\neq 0$ for all nonempty $A\in \CB$. 
If $(E,\CL,\CB)$ satisfies condition  $(L)$, then 
the canonical homomorphism $\phi:C^*(E,\CL,\CB)=C^*(s_a, p_A)\to 
 C^*(t_a, q_A)$ such that $\phi(s_a)=t_a$ and 
$\phi(p_A)=q_A$  is an isomorphism. 
\end{thm}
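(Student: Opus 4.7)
The plan is to follow the classical Cuntz-Krieger strategy, which splits the problem into two steps: first show that $\phi$ is isometric on the gauge fixed-point subalgebra $\CF^\gm := C^*(s_a,p_A)^\gm$ (the ``core''), and then use Condition (L) to show that every nonzero element of $C^*(s_a, p_A)$ can be cut down by a suitable element of the diagonal to produce a nonzero element of $\CF^\gm$ that is still detected by $\phi$. Since $\phi$ sends generators to generators satisfying the same relations (Definition \ref{rep}) and is compatible with the canonical $\Z$-grading, both steps are reasonable to attack.

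For injectivity on $\CF^\gm$, I would use the faithful conditional expectation $\Phi(x)=\int_\T \gm_z(x)\,dz$ to identify $\CF^\gm$ with the closed span of balanced monomials $s_\af p_A s_\bt^*$ with $|\af|=|\bt|$, as given by (\ref{eqn-elements}). This subalgebra admits a natural filtration $\CF^\gm=\overline{\bigcup_{n\geq 0}\CF^n}$, and each $\CF^n$ decomposes, after the Boolean splitting of ranges produced by the multiplication rule in Remark \ref{rmk1}(1) together with the lattice structure of $\CB$, into a finite direct sum of matrix algebras over the commutative subalgebra generated by $\{p_A:A\in\CB\}$. The hypothesis $q_A=\phi(p_A)\neq 0$ for every nonempty $A\in\CB$ forces $\phi$ to be injective on this commutative diagonal, and an induction on $n$ extends this to each $\CF^n$, hence to $\CF^\gm$.

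The second step, where Condition (L) is essential, is a cut-down argument. Assume for contradiction that $\phi(y)=0$ for some $y>0$; approximating $y$ to within $\ep$ by a finite sum $\sum_i c_i\, s_{\af_i} p_{A_i} s_{\bt_i}^*$ whose words all have a common length, split $y \approx d+e$ with $d\in\CF^\gm$ the balanced (diagonal) part and $e$ the off-diagonal remainder. For each off-diagonal pair $(\af_i,\bt_i)$ I would produce, via Condition (L), a projection $Q_i$ of the form $s_{\mu_i} p_{B_i} s_{\mu_i}^*$ that annihilates $s_{\af_i} p_{A_i} s_{\bt_i}^*$ from both sides: if repeated application of Remark \ref{rmk1}(1) were to turn the pair $(\af_i,\bt_i)$ into a cycle, the hypothesis produces an exit, and following that exit into an appropriate $B_i\in\CB$ kills the off-diagonal monomial while leaving the diagonal undisturbed up to a small perturbation. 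A finite product $Q=Q_1\cdots Q_k$ then satisfies $QeQ\approx 0$ and $\|QdQ\|\approx \|d\|>0$, yielding a nonzero element $QdQ\in\CF^\gm\cap\ker\phi$, contradicting Step 1.

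The main obstacle is carrying out the exit step uniformly in all the off-diagonal indices at once. In an ordinary graph, a single projection at a vertex lying on no cycle suffices, but in a labeled space one works with generalized vertices $[v]_l$ and with arbitrary sets $A\in\CB$, and distinct labeled paths may coincide in range or interact through relative-range operations; moreover, the cycles relevant to $(\af_i,\bt_i)$ may involve sets $B\subseteq r(A,\af_{[1,t]})$ whose Boolean relations have to be tracked across all indices simultaneously. Making this precise requires the closure of $\CB$ under finite intersections and relative complements (i.e., the normality hypothesis), together with the refinement of $[v]_l$ into $\bigcup_{i=1}^m [v_i]_{l+1}$. This delicate bookkeeping is exactly what is carried out in \cite[Theorem 5.5]{BP1} and \cite[Theorem 9.9]{COP}, whose constructions I would invoke to close the argument.
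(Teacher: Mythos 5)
The paper offers no proof of its own here—it simply cites \cite[Theorem 5.5]{BP1} and \cite[Theorem 9.9]{COP}—and your outline is precisely the standard core-plus-cut-down argument that those references carry out, with the hard combinatorial exit step ultimately deferred to the same sources. So your proposal is correct and takes essentially the same approach as the paper.
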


\subsection{Generalized Boolean dynamical systems and their $C^*$-algebras}\label{GBDS}
For details of the following, we refer the reader to \cite{ CaK1, CaK2, COP}.

Let $\CB$ be a Boolean algebra. 
A non-empty subset $\CI$ of $\CB$ is called  an {\em ideal} \cite[Definition 2.4]{COP} if 
\begin{enumerate}
\item[(i)] if $A, B \in \CI$, then $A \cup B \in \CI$,
\item[(ii)] if $A \in \CI$ and $ B \in \CB$, then   $A \cap B \in \CI$. 
\end{enumerate}
An ideal $\CI$ of a Boolean algebra $\CB$ is a Boolean subalgebra. For $A \in \CB$, the ideal generated by $A$ is defined by $\CI_A:=\{ B \in \CB : B \subseteq A\}.$

A {\em Boolean dynamical system} is a triple $(\CB,\CL,\theta)$ where $\CB$ is a Boolean algebra, $\CL$ is a set, and $\{\theta_\af\}_{\af \in \CL}$ is a set of actions on $\CB$ such that for $\af=\af_1 \cdots \af_n \in \CL^*\setminus\{\emptyset\}$, the action $\theta_\af: \CB \rightarrow \CB$ is defined as $\theta_\af:=\theta_{\af_n} \circ \cdots \circ \theta_{\af_1}$.  We  also define $\theta_\emptyset:=\text{Id}$.

For $B \in \CB$, we define
\[
\Delta_B^{(\CB,\CL,\theta)}:=\{\af \in \CL:\theta_\af(B) \neq
\emptyset \} ~\text{and}~  \ld_B^{(\CB,\CL,\theta)}:=|\Delta_B^{(\CB,\CL,\theta)}|.
\]
We will often just write $\Delta_B$ and $\ld_B$ instead of
$\Delta_B^{(\CB,\CL,\theta)}$ and $\ld_B^{(\CB,\CL,\theta)}$.
We say that $A \in \CB$ is {\em regular} (\cite[Definition 3.5]{COP})
if for any $\emptyset \neq B \in \CI_A$, we have $0 < \ld_B < \infty$.
If $A \in \CB$ is not regular, then it is called a {\em singular} set.
We write $\CB^{(\CB,\CL,\theta)}_{reg}$ or just $\CB_{reg}$ for the
set of all regular sets. Notice that $\emptyset\in\CB_{reg}$.

Let 
$
\mathcal{R}_\alpha:=\mathcal{R}_\alpha^{(\CB,\CL,\theta)}=\{A\in\mathcal{B}:A\subseteq\theta_\alpha(B)\text{ for some }B\in\mathcal{B}\}
$
for each $\alpha \in \mathcal{L}$. 
A {\em generalized Boolean dynamical system} is a quadruple  $(\CB,\CL,\theta,\CI_\alpha)$ where  $(\CB,\CL,\theta)$ is  a Boolean dynamical system  and  $\{\CI_\alpha:\alpha\in\CL\}$ is a family of ideals in $\CB$ such that $\CR_\alpha\subseteq\CI_\alpha$ for each $\alpha\in\CL$. A {\em  relative generalized Boolean dynamical system} is a pentamerous $(\CB,\CL,\theta,\CI_\alpha;\CJ)$ where  $(\CB,\CL,\theta,\CI_\alpha)$ is a  generalized Boolean dynamical system  and  $\CJ$ is an ideal of  $\CB_{reg}$ (\cite[Definition 3.2]{CaK2}).

\begin{dfn}(\cite[Definition 3.2]{CaK2})\label{def:representation of RGBDS} 
Let $(\CB,\CL,\theta, \CI_\af; \CJ)$ be a relative generalized Boolean dynamical system. A {\it  $(\CB, \CL, \theta, \CI_\af;\CJ)$-representation} is a family of projections $\{P_A:A\in\mathcal{B}\}$ and a family of partial isometries $\{S_{\alpha,B}:\alpha\in\mathcal{L},\ B\in\mathcal{I}_\alpha\}$ such that for $A,A'\in\mathcal{B}$, $\alpha,\alpha'\in\mathcal{L}$, $B\in\mathcal{I}_\alpha$ and $B'\in\mathcal{I}_{\alpha'}$,
\begin{enumerate}
\item[(i)] $P_\emptyset=0$, $P_{A\cap A'}=P_AP_{A'}$, and $P_{A\cup A'}=P_A+P_{A'}-P_{A\cap A'}$;
\item[(ii)] $P_AS_{\alpha,B}=S_{\alpha,  B}P_{\theta_\af(A)}$;
\item[(iii)] $S_{\alpha,B}^*S_{\alpha',B'}=\delta_{\alpha,\alpha'}P_{B\cap B'}$;
\item[(iv)] $P_A=\sum_{\af \in\Delta_A}S_{\af,\theta_\af(A)}S_{\af,\theta_\af(A)}^*$ for all  $A\in \mathcal{J}$.
\end{enumerate}
\end{dfn}

Given a $(\CB, \CL, \theta, \CI_\af;\CJ)$-representation $\{P_A, S_{\af,B}\}$ in a $C^*$-algebra $\CA$, we denote by $C^*(P_A, S_{\af,B})$ the $C^*$-subalgebra of $\CA$ generated by $\{ P_A,  S_{\af,B}\}$.
It is shown in \cite{CaK2} that there exists  a universal $(\CB, \CL, \theta, \CI_\af;\mathcal{J})$-representation $\{p_A, s_{\af,B}: A\in \CB, \af \in \CL ~\text{and}~ B \in \CI_\af\}$   in a  $C^*$-algebra. 
 We write $C^*(\mathcal{B},\mathcal{L},\theta, \CI_\af;\mathcal{J})$ for $C^*(p_A,s_{\af,B})$ and    call it the {\it  $C^*$-algebra of $(\CB,\CL,\theta,\CI_\alpha;\CJ)$}.

By a {\it Cuntz--Krieger representation of $(\CB, \CL,
\theta,\CI_\af)$} we mean a $(\CB, \CL, \theta,\CI_\af;
\CB_{reg})$-representation. We write $C^*(\CB,\CL,\theta, \CI_\af)$
for $C^*(\CB, \CL, \theta, \CI_\af;\CB_{reg})$ and call it the {\it
$C^*$-algebra of $(\CB,\CL,\theta,\CI_\alpha)$}.
 When $(\CB,\CL,\theta)$ is a Boolean dynamical system, then we write
$C^*(\CB,\CL,\theta)$ for $C^*(\CB,\CL,\theta, \CR_\af)$ and call it
the \emph{$C^*$-algebra of $(\CB,\CL,\theta)$}. 

\subsubsection{Viewing labeled graph $C^*$-algebras as $C^*$-algebras of generalized Boolean dynamical systems}
We view labeled graph $C^*$-algebras as $C^*$-algebras of generalized Boolean dynamical systems. 
Let $(E,\CL,\CB)$ be a  labeled space where $\CL:E^1 \to \CA$ is onto and put $C^*(E, \CL, \CB)=C^*(p_A, s_\af)$. Then $\CB$ is a Boolean algebra  and for each $\af \in \CA$, the map $\theta_{\af}:\CB \to \CB$ defined by 
$\theta_\af(A):=r(A,\af)$
is an action on $\CB$ (\cite[Example 11.1]{COP}).  Put $\CR_\af=\{A \in \CB: A \subseteq r(B,\af) ~\text{for some}~ B \in \CB\}$ and let $$\CI_{r(\af)}=\{A \in \CB : A \subseteq r(\af)\}.$$
It is clear that $\CR_\af \subseteq \CI_{r(\af)}$  for each $\af \in \CL$. Then $(\CB, \CA,\theta, \CI_{r(\af)})$ is a generalized Boolean dynamical system. We call it {\it a generalized Boolean dynamical system associated to $(E, \CL,\CB)$}. 
It then is straightforward to check that 
\[
\{p_A, s_\af p_B: A \in \CB, \af \in \CA ~\text{and}~ B \in \mathcal{I}_{r(\alpha)}\}	
\]
is a Cuntz--Krieger representation of $(\CB, \CA,\theta,\CI_{r(\af)} )$. Then the universal property of $C^*(\CB, \CA,\theta, \CI_{r(\af)})$ gives a $*$-homomorphism 
\[
\phi: C^*(\CB, \CA,\theta, \CI_{r(\af)}) \to C^*(E,\CL,\CB)	
\]
defined by 
\[
\phi(p_A)=p_A ~\text{and}~ \phi(s_{\af,B})=s_\af p_B	
\]
for all $A \in \CB, \af \in \CA$ and $B \in \mathcal{I}_{r(\alpha)}$. Since $s_\af=s_\af p_{r(\af)}$, the family $\{p_A, s_\af p_B: A \in \CB, \af \in \CA ~\text{and}~ B \in \mathcal{I}_{r(\alpha)} \}$ generates $C^*(E, \CL, \CB)$, and hence, the map $\phi$ is onto. Applying the gauge-invariant uniqueness theorem \cite[Corollary 6.2]{CaK2}, we conclude that  $\phi$ is an isomorphism.

We summarize this facts in the following. 
\begin{prop}\label{isom}(\cite[Example 4.2]{CaK2}) Let $(E,\CL,\CB)$ be a  labeled space, where $\CL:E^1 \to \CA$ is onto. Then  $(\CB, \CA,\theta, \CI_{r(\af)})$ is a generalized Boolean dynamical system and 
$C^*(E,\CL,\CB)$ is isomorphic to $C^*(\CB, \CA,\theta, \CI_{r(\af)}).$
\end{prop}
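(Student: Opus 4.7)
The plan is to follow the outline already indicated in the paragraph preceding the statement, turning it into a formal argument in four steps: verify that the quadruple is a generalized Boolean dynamical system, build a Cuntz--Krieger representation inside $C^*(E,\CL,\CB)$, invoke universality to get a surjection, and use the gauge-invariant uniqueness theorem to upgrade it to an isomorphism.

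First I would confirm that $(\CB,\CA,\theta,\CI_{r(\af)})$ really is a generalized Boolean dynamical system. The normality of $(E,\CL,\CB)$ gives that $\CB$ is a Boolean algebra; the map $\theta_\af(A)=r(A,\af)$ is a well-defined action on $\CB$ because $\CB$ is closed under relative ranges; each $\CI_{r(\af)}=\{A\in\CB:A\subseteq r(\af)\}$ is an ideal because $\CB$ is closed under finite intersections and unions; and $\CR_\af\subseteq\CI_{r(\af)}$ is immediate from the definitions since any $A\subseteq r(B,\af)$ lies inside $r(\af)$.

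Next I would check that the family
\[
\{p_A,\ s_\af p_B:A\in\CB,\ \af\in\CA,\ B\in\CI_{r(\af)}\}
\]
inside $C^*(E,\CL,\CB)$ is a Cuntz--Krieger representation of $(\CB,\CA,\theta,\CI_{r(\af)})$. Axiom (i) is immediate from Definition \ref{rep}(i). For (ii), using Definition \ref{rep}(ii) one has $p_A(s_\af p_B)=s_\af p_{r(A,\af)}p_B=(s_\af p_B)p_{r(A,\af)}$, which is exactly $S_{\af,B}P_{\theta_\af(A)}$. For (iii), for $\af\neq\af'$ we get $(s_\af p_B)^*(s_{\af'}p_{B'})=p_B s_\af^* s_{\af'}p_{B'}=0$ by Definition \ref{rep}(iii), and for $\af=\af'$ the same identity yields $p_B p_{r(\af)} p_{B'}=p_{B\cap B'}$ since $B,B'\subseteq r(\af)$. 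For (iv), given $A\in\CB_{reg}$ one has $\Delta_A=\CL(AE^1)$ (since $\theta_\af(A)=r(A,\af)\ne\emptyset$ iff $\af\in\CL(AE^1)$) and $\theta_\af(A)=r(A,\af)\in\CI_{r(\af)}$, so Definition \ref{rep}(iv) gives exactly the required identity. One should also recall that the two notions of regularity coincide, which is the content of Remark following Definition \ref{rep}.

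Then the universal property of $C^*(\CB,\CA,\theta,\CI_{r(\af)})$ supplies a $*$-homomorphism
\[
\phi:C^*(\CB,\CA,\theta,\CI_{r(\af)})\longrightarrow C^*(E,\CL,\CB),\qquad \phi(p_A)=p_A,\ \phi(s_{\af,B})=s_\af p_B.
\]
Surjectivity follows because the spanning set in \eqref{eqn-elements} is recovered: each generator $s_\af$ equals $s_\af p_{r(\af)}$ with $r(\af)\in\CI_{r(\af)}$, so $s_\af=\phi(s_{\af,r(\af)})$ lies in the image, and the $p_A$'s obviously do too. For injectivity I would apply the gauge-invariant uniqueness theorem \cite[Corollary 6.2]{CaK2}: the gauge action on $C^*(E,\CL,\CB)$ described in Remark \ref{rmk1}(2) fixes every $p_A$ and scales $s_\af p_B$ by $z$, hence is $\phi$-equivariant with respect to the canonical gauge action on $C^*(\CB,\CA,\theta,\CI_{r(\af)})$; and $\phi(p_A)=p_A\neq 0$ for every nonempty $A\in\CB$. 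The only step requiring any care is verifying hypothesis (iv) of Definition \ref{def:representation of RGBDS} on the nose, in particular checking that the indexing set $\Delta_A$ in the generalized Boolean dynamical system agrees with $\CL(AE^1)$ and that the equivalence between the two regularity notions is being used correctly; once that bookkeeping is done the rest is routine application of the universal and uniqueness results of \cite{CaK2}.
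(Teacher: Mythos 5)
Your proposal is correct and follows essentially the same route as the paper: the paper's own argument (given in the paragraph preceding the proposition) likewise verifies that $(\CB,\CA,\theta,\CI_{r(\af)})$ is a generalized Boolean dynamical system, checks that $\{p_A, s_\af p_B\}$ is a Cuntz--Krieger representation, obtains surjectivity of $\phi$ from $s_\af=s_\af p_{r(\af)}$, and concludes injectivity via the gauge-invariant uniqueness theorem of \cite{CaK2}. Your write-up merely fills in the verification of axioms (i)--(iv) that the paper declares ``straightforward,'' and does so correctly.
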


\section{Gauge-invariant ideals of $C^*(E, \CL,\CB)$}  In this section,  we give a complete list of the gauge-invariant ideals of $C^*$-algebras of arbitrary labeled spaces $(E, \CL,\CB)$ and describe the quotients as $C^*$-algebras of relative generalized Boolean dynamical systems.
  Most of results can be obtained  by the same arguments used in \cite{CaK2}. So, we omit its proof.

We  recall  from \cite[Definition 3.4]{JKP} 
that a subset   $\CH$ of $\CB$ is said to be   {\it hereditary} if 
\begin{enumerate}
\item if $A \in \CH$, then $r(A, \af) \in \CH$ for all  $\af \in \CL^*(E)$,
\item if $A, B \in \CH$, then $A \cup B \in \CH$,
\item if $ A \in \CH$ and $B \in \CB$ with $B \subset A$, then $B \in \CH$.
\end{enumerate}
A hereditary set $\CH$ is said to be {\it saturated} if  $A \in H$ 
whenever $A \in \CB_{reg} $ satisfies $r(A,\af) \in \CH$ for all $\af \in \CL^*(E)$ (\cite[Section 2.3]{CaK2}). 

\vskip 1pc 
  
The following lemma shows  how to find a hereditary saturated subset. 
\begin{lem}\label{hereditary saturated set}
For a nonempty set  $A\in \CB$, let
\begin{align}\label{H(A)}\CH(A):=\{B \in \CB: B \subseteq \cup_{i=1}^n r(A, \af_i) ~\text{for some}~  \af_i \in \CL^\#(E) \}. \end{align}
is the smallest hereditary set that contains $A$, and
\begin{align*}
	\CS(\CH(A)):=\{B\in\CB:& \text{there is an }n \geq 0 \text{ such that }r(B, \bt)\in\CH(A)\text{ for all }\beta\in\CL^n(E),\\
	&\text{and}~r(B,\gm)\in\CH(A)\oplus\CB_{reg}\text{ for all }\gamma\in\CL^\#(E) \text{ with }|\gamma|<n\}
\end{align*}
is the smallest saturated hereditary set that contains $A$, where
$$\CH(A)\oplus\CB_{reg}:=\{C \cup D:C \in \CH(A) ~\text{and}~  D \in \CB_{reg}\}.$$
\end{lem}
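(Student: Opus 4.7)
The lemma has two halves, so I would split the plan accordingly.

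For the first half, that $\CH(A)$ is the smallest hereditary set containing $A$: I would observe $A = r(A,\ep) \in \CH(A)$ (taking $n=1$ and $\af_1 = \ep$) and then verify the three hereditary axioms directly from the defining formula. Closure under relative ranges follows from the identity $r(r(A,\af_i),\af) = r(A,\af_i\af)$ and the distributivity of $r(-,\af)$ over finite unions; closure under finite unions is clear since a finite union of sets of the form $\bigcup_i r(A,\af_i)$ is again of this form; closure under subsets is immediate. Minimality is then forced: any hereditary set containing $A$ must contain each $r(A,\af)$, their finite unions, and all subsets thereof.

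For the second half, I would start by noting that the defining conditions with witness $n=0$ collapse to $B = r(B,\ep) \in \CH(A)$, so $\CH(A) \subseteq \CS(\CH(A))$ and in particular $A \in \CS(\CH(A))$. The three hereditary axioms for $\CS(\CH(A))$ I would prove by tracking the witness under each operation: if $B$ has witness $n$ then $r(B,\af)$ inherits the witness $\max(n-|\af|,0)$, by concatenating $\af$ with a tail to produce paths of the required length from $B$; for closure under unions, two sets $B_1,B_2$ with witnesses $n_1,n_2$ can both be lifted to a common witness $n = \max(n_1,n_2)$ using $\CH(A) \subseteq \CH(A)\oplus\CB_{reg}$ together with heredity of $\CH(A)$, after which unions distribute over $r(-,\af)$; for closure under subsets, use that both $\CH(A)$ and $\CH(A)\oplus\CB_{reg}$ are closed under subsets, the latter via the ideal property of $\CB_{reg}$ in $\CB$.

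For saturation of $\CS(\CH(A))$: given $B \in \CB_{reg}$ with $r(B,\af) \in \CS(\CH(A))$ for every $\af \in \CL^*(E)$, regularity of $B$ forces $\CL(BE^1)$ to be finite, so I can fix $N$ strictly larger than every witness $n_{\af^i}$ attached to $r(B,\af^i)$ for $\af^i \in \CL(BE^1)$. I would then verify $N$ is a witness for $B$ by decomposing any $\bt \in \CL^N(E)$ or $\gm \in \CL^{\#}(E)$ with $|\gm|<N$ at its first letter: if the letter is not in $\CL(BE^1)$ the range is empty (and lies in both $\CH(A)$ and $\CH(A)\oplus\CB_{reg}$), otherwise the tail can be matched against the witness $n_{\af^i}$. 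The special case $\gm=\ep$ is handled by $B \in \CB_{reg} \subseteq \CH(A)\oplus\CB_{reg}$.

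Finally, for minimality, let $\CH'$ be any saturated hereditary set containing $A$; the first half already forces $\CH(A) \subseteq \CH'$. I would then prove by strong induction on the witness $n\geq 0$ that every $B \in \CS(\CH(A))$ with witness $n$ lies in $\CH'$. The base case $n=0$ is just $B \in \CH(A) \subseteq \CH'$. For $n\geq 1$: for each $\af$ with $1 \leq |\af| < n$ the set $r(B,\af)$ has the smaller witness $n-|\af|$, so by induction lies in $\CH'$, while for $|\af| \geq n$ we have $r(B,\af) \in \CH(A) \subseteq \CH'$. The $\gm=\ep$ condition decomposes $B = D \cup E$ with $D \in \CH(A) \subseteq \CH'$ and $E \in \CB_{reg}$; since $r(E,\af) \subseteq r(B,\af) \in \CH'$ for every $\af$ of positive length, saturation of $\CH'$ yields $E \in \CH'$, hence $B = D \cup E \in \CH'$. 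The main obstacle I anticipate is precisely this inductive minimality step: the two-tier nature of the definition of $\CS(\CH(A))$ (length-$n$ paths into $\CH(A)$ versus shorter paths into $\CH(A)\oplus\CB_{reg}$) must be unpacked carefully so that the inductive hypothesis delivers exactly the ingredients needed to invoke saturation of $\CH'$.
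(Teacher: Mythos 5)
Your proposal is correct and follows essentially the same route as the paper: the paper likewise verifies the hereditary and minimality claims directly (dismissing them as ``straightforward'') and details only the saturation of $\CS(\CH(A))$, using exactly your argument of exploiting $|\CL(BE^1)|<\infty$ to take a common witness exceeding all the $n_i$. Your additional work --- tracking witnesses under relative ranges and unions, and the induction on the witness $n$ combined with the decomposition $B=D\cup E$, $D\in\CH(A)$, $E\in\CB_{reg}$, to establish minimality of $\CS(\CH(A))$ --- correctly fills in the steps the paper leaves to the reader.
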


\begin{proof}
It is straightforward to check that $\CH(A)$ is hereditary , and it is easy to see that if $\CH$ is a hereditary set and $A\in\CH$, then $\CH(A)\subseteq\CH$.

It is rather obvious  that $\CS(\CH(A))$ is hereditary. To show that it is  saturated, let $B \in \CB_{reg}$ satisfy $r(B, \af) \in \CS(\CH(A))$ for all $\af \in \CL^*(E)$.  Put $\CL(BE^1)=\{\af_1, \cdots, \af_n\}$ (this is a finite set since $B \in \CB_{reg}$). Then $r(B, \af_i) \in \CS(\CH(A))$ for each $i$, and thus 
there is an $n_i \geq 1$ such that $r(r(B, \af_i), \bt) \in \CS(\CH(A))$ for all $\bt \in \CL^{n_i}(E)$ and 
$r(r(B, \af_i),\gm)\in\CH(A)\oplus\CB_{reg} $ for all $\gamma\in\CL^\#(E)$ with $|\gamma|<n_i$. 
Take $n := \max_{1 \leq i \leq n}\{n_i\}$. 
Then $r(B, \bt) \in \CH(A)$ for all $\bt \in \CL^{n+1}(E)$ and $r(B, \gm)\in\CH(A)\oplus\CB_{reg} $ for all $\gamma\in\CL^\#(E)$ with $|\gamma|<n+1$. Thus, $B \in \CS(\CH(A))$.
  It is also straightforward to check that if $\CS$ is a saturated hereditary set and $A\in\CS$, then $\CS(\CH(A))\subseteq\CS$.
  
\end{proof}

\begin{ex}\label{exex} For the labeled graph 
 \vskip 1pc
 \hskip 2.5cm
\xy  /r0.3pc/:(-34.2,0)*+{\cdots};(5,0.7)*+{\vdots};
(-30,0)*+{\bullet}="V-3";
(-20,0)*+{\bullet}="V-2";
(-10,0)*+{\bullet}="V-1"; (0,0)*+{\bullet}="V0";
(10,0)*+{\bullet}="V1"; 
(20,0)*+{\bullet}="V2"; 
  "V-3";"V-2"**\crv{(-30,0)&(-20,0)};
 ?>*\dir{>}\POS?(.5)*+!D{};
 "V-2";"V-1"**\crv{(-20,0)&(-10,0)};
 ?>*\dir{>}\POS?(.5)*+!D{};
 "V-1";"V0"**\crv{(-10,0)&(0,0)};
 ?>*\dir{>}\POS?(.5)*+!D{};
 "V0";"V1"**\crv{(0,0)&(5,5)&(10,0)}; ?>*\dir{>}\POS?(.5)*+!D{};
  "V0";"V1"**\crv{(0,0)&(5,-5)&(10,0)}; ?>*\dir{>}\POS?(.5)*+!D{};
   "V1";"V2"**\crv{(10,0)&(15,5)&(20,0)}; ?>*\dir{>}\POS?(.5)*+!D{};
  "V2";"V1"**\crv{(20,0)&(15,-5)&(10,0)}; ?>*\dir{>}\POS?(.5)*+!D{};
   ;(-25,1.5)*+{3};
 (-15,1.5)*+{2};(-5,1.5)*+{1};(5,4.5)*+{(b_n)_{n}};
 (0.1,-2.5)*+{v_1};(10.1,-2.5)*+{w_1};(20.1,-2.5)*+{w_2};
 (-9.9,-2.5)*+{v_{2}};
 (-19.9,-2.5)*+{v_{3}};
 (-29.9,-2.5)*+{v_{4}};
  (15,4)*+{a}; (15,-4)*+{a}; 
\endxy 
\vskip 1pc 

\noindent ,where $v_1$ emits infinitely many labeled edges $(b_n)_{n \geq 1}$, consider the labeled space $(E,\CL,\CE)$. It is clear that   $\CH(r(a))=\{\emptyset, \{w_1\}, \{w_2\}, \{w_1, w_2\}\}$.  
 One sees that $r(\{v_1\}, \bt) \in \CH(r(a))$ for $|\bt| \geq 1$, but $\{v_1\} \notin \CH(r(a)) \oplus \CE_{reg}$. So, $\{v_1\} \notin  \CS(\CH(r(a)))$. It is also easy to see that $\{v_i\} \notin  \CS(\CH(r(a)))$ for each $i > 1$. In fact, $\CS(\CH(r(a)))=\CH(r(a))$. 
  
\end{ex}

\subsection{A quotient Boolean dynamical system $(\CB/ \CH, \CA, \theta)$ associated to $(E,\CL,\CB)$}

Let $(E, \CL,\CB)$ be a labeled space, where $\CL: E^1 \to \CA$ is assumed to be onto. 
If $\CH$ is a hereditary subset of $\CB$, then the relation
\begin{eqnarray}\label{equivalent relation} 
A \sim B \iff  A \cup W = B \cup W~\text{for some}~ W \in \CH
\end{eqnarray}
defines an equivalent relation on $\CB$ (\cite[Proposition 3.6]{JKP}).
We denote the equivalent class of $A \in \CB$ with respect to $\sim$ by $[A]$ (or $[A]_\CH$ if we need to specify the hereditary set $\CH$) and the set of all equivalent classes of $\CB$ by $\CB / \CH$. It is easy to check that $\CB / \CH$ is a Boolean algebra with operations defined by 
\[
[A]\cap [B]=[A\cap B], [A]\cup [B]=[A\cup B] ~\text{and}~ [A]\setminus [B]=[A\setminus B].
\]
The partial order $\subseteq$ on $\CB / \CH$ is characterized by  
\begin{align*} 
[A] \subseteq [B] & \iff A \subseteq B\cup W  ~\text{for some}~  W \in \CH \\
&\iff  [A] \cap [B] =[A].
\end{align*}
If ,in addition, we define $\theta_\af: \CB/\CH \to \CB/\CH$ by $\theta_\af([A])=[r(A, \af)]$ for all $[A] \in \CB/\CH$ and $ \af \in \CA$, then  $(\CB/ \CH, \CA, \theta)$ becomes a Boolean dynamical system (see \cite[Proposition 3.6]{JKP}). We call it a {\em quotient Boolean dynamical system associated to $(E,\CL,\CB)$}.


  \begin{remark} 
Given a labeled space $(E, \CL,\CB)$ and hereditary set $\CH$ of $\CB$, there can be a $[A] \in \CB /\CH$ such that $[A] \neq [\emptyset]$, but $[r(A, \af)]=[\emptyset]$ for all $\af \in \CA$. For example, for the labeled space $(E, \CL, \CE)$ in Example \ref{exex}, 
we have $\CH:=\CH(r(a))$ is a hereditary saturated subset of $\CE$.
It is easy to see that   $[\{v_1\}] \neq [\emptyset]$, but $[r(\{v_1\}, \af)]=[\emptyset]$ for all $\af \in \CA$. 
\end{remark}

 A {\em filter} \cite[Definition 2.6]{COP} $\xi$ in a Boolean algebra $\CB$ is a non-empty subset $\xi \subseteq \CB$  such that 
\begin{enumerate}
\item[$\mathbf{F0}$] $\emptyset \notin \xi$,
\item[$\mathbf{F1}$] if $ A \in \xi$ and  $A \subseteq B$, then $B \in \xi$,
\item[$\mathbf{F2}$] if $A,B \in \xi$, then $A \cap B \in \xi$.
\end{enumerate}
If in addition $\xi$ satisfies 
\begin{enumerate}
\item[$\mathbf{F3}$] if $A \in \xi$  and $B,B' \in \CB$ with $A=B \cup B'$, then either $B \in \xi$ or $B' \in \xi$,
\end{enumerate}
then it is called an {\em ultrafilter} \cite[Definition 2.6]{COP} of $\CB$. A filter is an ultrafilter if and only if it is a maximal element in the set of filters with respect to inclusion. 
We write $\widehat{\CB}$  for the set of all ultrafilters of $\CB$.

\begin{dfn}(\cite[Definition 3.1 and Definition 5.1]{CaK1}) Let  $(E, \CL, \CB)$ be a labeled space and $\alpha\in \CL^*(E) \setminus\{\emptyset\}$ and $\eta\in\widehat{\CB}$. 
\begin{enumerate}
\item  A pair $(\alpha,\eta)$ is  an \emph{ultrafilter cycle} if $r(A, \af)\in\eta$ for all $A \in \eta$.
\item $(E, \CL, \CB)$  satisfies \emph{Condition (K)} if there is no pair $((\alpha,\eta),A)$ where $(\alpha,\eta)$ is an ultrafilter cycle and $A\in\eta$ such that if $\beta\in\CL^*\setminus\{\emptyset\}$, $B \subseteq A$, and $r(B,\bt)\in\eta$, then $B\in\eta$ and $\beta=\alpha^k$ for some $k\in\N$.
\end{enumerate}
\end{dfn}

\begin{lem}\label{LK}(\cite[Theorem 6.3]{CaK1}) Let $(E, \CL,\CB)$ be a labeled space. Then $(E, \CL,\CB)$ satisfies Condition (K) if and only if the quotient Boolean dynamical system $(\CB/ \CH, \CA, \theta)$ associated to $(E, \CL,\CB)$ satisfies Condition (L) for every hereditary saturated subset $\CH$ of $\CB$. 
\end{lem}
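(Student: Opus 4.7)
The plan is to prove both directions by matching up ultrafilter cycles witnessing failures of Condition~(K) with ordinary cycles without exit witnessing failures of Condition~(L) in a quotient.

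$(\Rightarrow)$ Assume $(E,\CL,\CB)$ satisfies Condition~(K) and fix a hereditary saturated $\CH\subseteq\CB$. I would argue by contraposition. Suppose $(\CB/\CH,\CA,\theta)$ has a cycle $(\af,[A])$ without exit, so $\theta_\af([B])=[B]$ for all $[B]\subseteq[A]$ and, for every $t\leq|\af|$ and every nonzero $[B]\subseteq\theta_{\af_{[1,t]}}([A])$, the element $[B]$ is regular in the quotient with $\Delta_{[B]}=\{\af_{t+1}\}$. Using Zorn's lemma on the collection of filters of $\CB$ that contain $A$ and are disjoint from $\CH$, I would produce an ultrafilter $\eta$ of $\CB$ with $A\in\eta$ and $\eta\cap\CH=\emptyset$. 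The invariance $\theta_\af([B])=[B]$ for $[B]\subseteq[A]$ translates to $r(B,\af)\in\eta$ for every $B\in\eta$ with $B\subseteq A$, and maximality of $\eta$ promotes this to $(\af,\eta)$ being an ultrafilter cycle. The no-exit clause (only $\af_{t+1}$ survives mod $\CH$) forces that whenever $B\subseteq A$ and $r(B,\bt)\in\eta$, the word $\bt$ must be an initial segment of powers of $\af$, hence $\bt=\af^k$, and that $B\in\eta$. This gives a pair $((\af,\eta),A)$ violating Condition~(K), contradicting the hypothesis.

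$(\Leftarrow)$ Conversely, assume every quotient satisfies Condition~(L) and that $(E,\CL,\CB)$ fails Condition~(K), witnessed by $((\af,\eta),A)$. I would let
$$\CH_0:=\{B\in\CB : r(B,\gm)\notin\eta \text{ for every } \gm\in\CL^\#(E)\}$$
and take $\CH$ to be the saturation of $\CH_0$ inside $\CB$. The definition plus Lemma~\ref{hereditary saturated set} give hereditarity; saturation needs the finitely-many-labels property of regular sets combined with property $\mathbf{F3}$ of the ultrafilter $\eta$: if $B\in\CB_{reg}$ with all its images in $\CH$, then $\mathbf{F3}$ on the finite partition $B\mapsto\bigcup_{a\in\CL(BE^1)} s(r(B,a))\text{-stuff}$ forces $B\notin\eta$. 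Disjointness $A\cap\CH=\emptyset$ (so $[A]\neq[\emptyset]$), the cycle relation $r(A,\af)\in\eta$, and the rigidity $\bt=\af^k$ together yield that $(\af,[A])$ is a cycle without exit in $(\CB/\CH,\CA,\theta)$, contradicting Condition~(L) for the quotient.

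The main obstacle is verifying saturation of $\CH$ in the $(\Leftarrow)$ direction, and, symmetrically, verifying the $\bt=\af^k$ clause in the $(\Rightarrow)$ direction: one must rule out the possibility that some label $\bt$ different from a power of $\af$ produces a quotient class $[r(B,\bt)]$ that is simultaneously disjoint from the translates $\theta_{\af_{[1,t]}}([A])$ yet still nonzero --- exactly the scenario the no-exit hypothesis forbids. Once both delicate points are secured, the equivalence follows. Since the statement is cited from \cite[Theorem~6.3]{CaK1}, one may simply defer to that reference for the detailed argument.
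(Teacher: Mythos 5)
The paper's entire proof of this lemma is the single line ``It follows by \cite[Theorem 6.3]{CaK1}'' --- no argument is given. Since your final move is to defer to that same reference, at the level of what the paper actually does your proposal and the paper coincide, and in that sense you are fine.

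That said, your attempted reconstruction of the cited theorem, were it meant to stand on its own, is not yet a proof, and you partly say so yourself. Two points in particular. First, in the $(\Leftarrow)$ direction you never verify that $(\af,[A])$ is a \emph{cycle} in $(\CB/\CH,\CA,\theta)$, i.e.\ that $[r(B,\af)]=[B]$ for every $[B]\subseteq[A]$. The containment $B\setminus r(B,\af)\in\CH$ follows from the Condition (K) failure because $B\setminus r(B,\af)\subseteq A$, but $r(B,\af)\setminus B$ need not be a subset of $A$, so the $(\bt=\af^k)$-rigidity clause does not apply to it directly; one has to show $r\bigl(r(B,\af)\setminus B,\gm\bigr)\notin\eta$ by intersecting with $r(B,\gm)\in\eta$ and invoking the weakly left-resolving identity $r(C,\gm)\cap r(B,\gm)=r(C\cap B,\gm)=\emptyset$. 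None of this appears in your sketch. Second, your saturation argument for $\CH$ (``$\mathbf{F3}$ on the finite partition $B\mapsto\bigcup_a s(r(B,a))$-stuff'') is not a coherent step as written; the cleaner route is to take $\CH=\CS(\CH_0)$ using the construction of Lemma \ref{hereditary saturated set} and then check separately that $A\notin\CS(\CH_0)$ (which holds because $r(A,\bar\af_{[1,n]})\notin\CH_0$ for every $n$). Symmetrically, in the $(\Rightarrow)$ direction the promotion of a filter maximal among those disjoint from $\CH$ to an ultrafilter needs the primality argument, which you gesture at but do not supply. If you intend the sketch only as motivation and the citation as the actual proof, you match the paper; if you intend the sketch as the proof, these are genuine gaps.
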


\begin{proof}It  follows by \cite[Theorem 6.3]{CaK1}.
\end{proof}

\subsection{Gauge-invariant ideals of $C^*(E, \CL,\CB)$}

Given a hereditary saturated subset $\CH$ of $\CB$, we define an ideal
$$
\CB_{\CH}:=\{ A \in \CB : [A] \in (\CB/\CH)_{reg} \}
$$
of $\CB$.    Choose an  ideal $\CS $  of $\CB_\CH$ (and hence an ideal of $\CB$) such that $\CH \cup \CB_{reg} \subseteq \CS $.  Let $I_{(\CH,\CS)}$ denote the ideal of $C^*(E, \CL, \CB):=C^*(p_A, s_{\af})$ generated by the family of projections 
$$
\biggl\{p_A-\sum_{\af \in \Delta_{[A]}} s_\af p_{r(A, \af)}s_\af^*: A \in \CS\biggr\},
$$
where $\Delta_{[A]}:=\{\af \in \CA: [r(A, \af)] \neq [\emptyset]\}$.
Then the ideal $I_{(\CH,\CS)}$ is gauge-invariant (\cite[Lemma 7.1]{CaK2}) and 
\begin{equation}\label{looks-ideal}
I_{(\CH,\CS)}=\overline{\operatorname{span}}\{s_{\af}(p_{A}-p_{A,\CH})s_{\bt}^*:A \in \CS ~\text{and}~  \af,\bt \in \CL^*(E) \},
\end{equation}
where we put $p_{A, \CH}:=\sum_{\af \in \Delta_{[A]}} s_\af p_{r(A, \af)}s_\af^*$.


We shall prove in Theorem \ref{isomorphism to quotient} that every gauge-invariant ideals of $C^*(E, \CL, \CB)$ is of the form $I_{(\CH,\CS)}$ for a hereditary saturated set $\CH$ and an ideal $\CS$ of $\CB_\CH$ with $\CH \subseteq \CS$ and $\CB_{reg} \subseteq \CS $.
We first observe the following.

\begin{lem}(\cite[Lemma 7.2]{CaK2})\label{her-sat from ideal} 
Let $I$ be a nonzero ideal in $C^*(E, \CL, \CB)$.
\begin{enumerate}
\item  The set $\CH_I:=\{A \in \CB: p_A \in I\}$
is a hereditary saturated subset of $\CB$.
\item  The set 
 $$
\CS_I:=\biggl\{A \in \CB_{\CH_I}: p_A-\sum_{\af \in \Delta_{[A]}} s_\af p_{r(A, \af)}s_\af^* \in I\biggr\}	
$$
is an ideal of $\CB_{\CH_I}$ (and hence an ideal of $\CB$) with $\CH_I \subseteq \CS_I$ and $\CB_{reg} \subseteq \CS_I$. 
\end{enumerate} 
\end{lem}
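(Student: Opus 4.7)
The plan is to exploit the isomorphism $C^*(E,\CL,\CB)\cong C^*(\CB,\CA,\theta,\CI_{r(\af)})$ of Proposition \ref{isom} so that the statement reduces to the analogous result for generalized Boolean dynamical systems, namely \cite[Lemma 7.2]{CaK2}. Nonetheless, the core verifications can be carried out directly from the defining relations in Definition \ref{rep}.

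For part (1), I would first show $\CH_I$ is hereditary by checking the three defining conditions recalled at the start of Section 3: closure under relative ranges follows from the identity $p_{r(A,\af)}=s_\af^*p_As_\af$, which one derives from $p_As_\af=s_\af p_{r(A,\af)}$ of Definition \ref{rep}(ii) together with $s_\af^*s_\af=p_{r(\af)}\geq p_{r(A,\af)}$; closure under unions comes from the inclusion-exclusion relation $p_{A\cup B}=p_A+p_B-p_{A\cap B}$; and downward closure uses $p_B=p_Ap_B$ when $B\subseteq A$. Saturation is then immediate from Definition \ref{rep}(iv): if $A\in\CB_{reg}$ has $r(A,\af)\in\CH_I$ for every $\af\in\CL(AE^1)$, then $p_A=\sum_{\af\in\CL(AE^1)}s_\af p_{r(A,\af)}s_\af^*$ lies in $I$.

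For part (2), the two inclusions $\CH_I\subseteq\CS_I$ and $\CB_{reg}\subseteq\CS_I$ are short. If $A\in\CH_I$, then $[A]=[\emptyset]$ is vacuously regular in $\CB/\CH_I$, so $A\in\CB_{\CH_I}$, and both $p_A$ and each $s_\af p_{r(A,\af)}s_\af^*$ lie in $I$ by part (1). If $A\in\CB_{reg}$, the Cuntz-Krieger relation splits as
\[
p_A-\sum_{\af\in\Delta_{[A]}}s_\af p_{r(A,\af)}s_\af^*=\sum_{\af\in\CL(AE^1)\setminus\Delta_{[A]}}s_\af p_{r(A,\af)}s_\af^*,
\]
whose right-hand side lies in $I$ because each $r(A,\af)$ with $\af\notin\Delta_{[A]}$ belongs to $\CH_I$. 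To see $A\in\CB_{\CH_I}$, every nonempty $[B]\subseteq[A]$ admits the representative $B\cap A\subseteq A$, which is itself regular (hence $\ld_{[B]}\leq|\CL((B\cap A)E^1)|<\infty$), while saturation of $\CH_I$ prevents all labels from sending $B\cap A$ into $\CH_I$ unless $[B\cap A]=[\emptyset]$, yielding $\ld_{[B]}\geq 1$.

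The main obstacle is verifying that $\CS_I$ is closed under finite unions and under intersections with elements of $\CB_{\CH_I}$. Setting $q_A:=p_A-\sum_{\af\in\Delta_{[A]}}s_\af p_{r(A,\af)}s_\af^*$, one must show $q_{A\cup B},q_{A\cap C}\in I$ whenever $q_A,q_B\in I$ and $C\in\CB_{\CH_I}$. The subtle point is that $\Delta_{[A\cup B]}$ is not in general the union $\Delta_{[A]}\cup\Delta_{[B]}$, but the discrepancies correspond to labels whose relative range already lies in $\CH_I$, so the extra summands lie in $I$. Combining this observation with inclusion-exclusion for the $p_\bullet$'s and the product formula of Remark \ref{rmk1}(1) yields the desired identities. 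Since these bookkeeping manipulations are precisely the content of \cite[Lemma 7.2]{CaK2} and transfer verbatim through Proposition \ref{isom}, my preferred route is simply to invoke the cited lemma.
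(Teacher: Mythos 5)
Your proposal is correct, and its ultimate route---transporting the statement through the isomorphism of Proposition \ref{isom} and invoking \cite[Lemma 7.2]{CaK2}---is exactly the paper's own (one-line) proof. The additional direct verification you sketch (hereditariness via $p_{r(A,\af)}=s_\af^*p_As_\af$, saturation via relation (iv), and the splitting of the Cuntz--Krieger sum over $\CL(AE^1)\setminus\Delta_{[A]}$) is sound and consistent with the paper's definitions.
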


\begin{proof} It  follows by Proposition \ref{isom} and \cite[Lemma 7.2]{CaK2}.
\end{proof}

 A quotient labeled space was  introduced in \cite{JKP} to 
study the ideal structure of  $C^*$-algebras of a set-finite and receiver set-finite labeled space $(E, \CL,\CB)$ with  $E$ having no sinks. Given a hereditary saturated subset $\CH$ of $\CB$, let $I_\CH$ be the ideal of $C^*(E, \CL,\CB)$ generated by the projections $\{p_A : A \in \CH\}$. 
Then the quotient algebra $C^*(E, \CL,\CB)/I_\CH$ by the gauge-invariant ideal $I_\CH$ 
as a $C^*$-algebras $C^*(E, \CL, \CB/\CH)$ of a quotient labeled space $(E, \CL, \CB/\CH)$ (\cite[Theorem 5.2]{JKP}). 
The quotient labeled space $(E, \CL, \CB/\CH)$   is nothing but a Boolean dynamical system $(\CB/\CH, \CA, \theta)$. So, to generalize this result to $C^*$-algebras of arbitrary labeled spaces,  we  use   relative generalized Boolean dynamical systems rather than  we newly define   relative quotient labeled spaces  of arbitrary labeled spaces.

\begin{prop}(\cite[Proposition 7.3]{CaK2})\label{isomorphism to quotient} 
Let $(E, \CL,\CB)$ be a labeled space. Suppose that $I$ is an ideal of $C^*(E, \CL,\CB)$. 
There is then a surjective $*$-homomorphism
\[
\phi_I:C^*(\CB/\CH_I,\CA,\theta,[\CI_{r(\af)}];[\CS_I])\to
C^*(E, \CL,\CB)/I
\] 
such that
\[
\phi_I(p_{[A]})=p_A+I ~\text{ and}~	
\phi_I(s_{\alpha, [B]})=s_{\alpha}p_B+I,
\]
 where $[\CI_{r(\af)}]:=\{[A] \in \CB / \CH_I: A \in \CI_{r(\af)} \}$ and $[\CS_I]:=\{[A] \in \CB/ \CH_I:  A \in \CS_I\}$.
 Moreover, the following are equivalent.
\begin{enumerate}
\item $I$ is gauge-invariant.
\item The map $\phi_I$ is an isomorphism.
\item $I=I_{(\CH_I,\CS_I)}$.
\end{enumerate}
\end{prop}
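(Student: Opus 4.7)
The plan is to reduce the entire statement to the corresponding result \cite[Proposition 7.3]{CaK2} about relative generalized Boolean dynamical systems by exploiting the identification in Proposition \ref{isom}. Under the isomorphism $C^*(E,\CL,\CB)\cong C^*(\CB,\CA,\theta,\CI_{r(\af)})$ given by $p_A\mapsto p_A$ and $s_\af p_B\mapsto s_{\af,B}$, the ideal $I\subseteq C^*(E,\CL,\CB)$ corresponds to an ideal in the generalized Boolean dynamical system, and the data $(\CH_I,\CS_I)$ produced by Lemma \ref{her-sat from ideal} is precisely what feeds into the CaK2 framework.

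First I would construct $\phi_I$ by verifying that the family
\begin{equation*}
\{P_{[A]}:=p_A+I:[A]\in\CB/\CH_I\}\cup\{S_{\af,[B]}:=s_\af p_B+I:\af\in\CA,\ [B]\in[\CI_{r(\af)}]\}
\end{equation*}
in $C^*(E,\CL,\CB)/I$ is a $(\CB/\CH_I,\CA,\theta,[\CI_{r(\af)}];[\CS_I])$-representation in the sense of Definition \ref{def:representation of RGBDS}. Well-definedness of $P_{[A]}$ and $S_{\af,[B]}$ follows because whenever $A\sim A'$ via some $W\in\CH_I$, one has $p_W\in I$, so $p_A+I=p_{A'}+I$, and analogously for the partial isometries. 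Relations (i)--(iii) of Definition \ref{def:representation of RGBDS} translate directly from the identities in $C^*(E,\CL,\CB)$ recalled in Remark \ref{rmk1} together with the Boolean operations on $\CB/\CH_I$. Relation (iv) is the crux: for $[A]\in[\CS_I]$ one must show
\begin{equation*}
P_{[A]}=\sum_{\af\in\Delta_{[A]}}S_{\af,\theta_\af([A])}S_{\af,\theta_\af([A])}^*,
\end{equation*}
which, translated back to $C^*(E,\CL,\CB)/I$, reads $p_A-\sum_{\af\in\Delta_{[A]}}s_\af p_{r(A,\af)}s_\af^*\in I$; that is the very defining property of $\CS_I$ in Lemma \ref{her-sat from ideal}(2). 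Universality then yields $\phi_I$, and surjectivity is automatic since the image contains all generators $p_A+I$ and $s_\af p_B+I$ of $C^*(E,\CL,\CB)/I$.

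For the equivalences I would argue cyclically. The implication (3)$\Rightarrow$(1) is immediate, since $I_{(\CH_I,\CS_I)}$ is gauge-invariant by \cite[Lemma 7.1]{CaK2}. For (1)$\Rightarrow$(2), when $I$ is gauge-invariant the gauge action on $C^*(E,\CL,\CB)$ descends to $C^*(E,\CL,\CB)/I$ and is intertwined by $\phi_I$ with the canonical gauge action on $C^*(\CB/\CH_I,\CA,\theta,[\CI_{r(\af)}];[\CS_I])$. By the definition of $\CH_I$, $P_{[A]}\neq 0$ whenever $[A]\neq[\emptyset]$, and by the definition of $\CS_I$, the ``gap projections'' $P_{[A]}-\sum_{\af\in\Delta_{[A]}}S_{\af,\theta_\af([A])}S_{\af,\theta_\af([A])}^*$ are nonzero whenever $[A]\notin[\CS_I]$. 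The gauge-invariant uniqueness theorem for relative generalized Boolean dynamical systems \cite[Corollary 6.2]{CaK2} then forces $\phi_I$ to be injective. Finally (2)$\Rightarrow$(3): once $\phi_I$ is an isomorphism, $I$ coincides with the kernel of the quotient map $C^*(E,\CL,\CB)\to C^*(\CB/\CH_I,\CA,\theta,[\CI_{r(\af)}];[\CS_I])$ pulled back through $\phi_I$, and a direct comparison against the explicit description \eqref{looks-ideal} identifies this kernel with $I_{(\CH_I,\CS_I)}$.

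The step I expect to be the main obstacle is the careful bookkeeping around relation (iv): one must check that $\Delta_{[A]}$ computed in the quotient Boolean dynamical system is finite exactly when $[A]\in(\CB/\CH_I)_{reg}$, that $\theta_\af([A])=[r(A,\af)]$ lies in $[\CI_{r(\af)}]$, and that sums over $\Delta_{[A]}$ taken in $C^*(E,\CL,\CB)/I$ match the corresponding sums in the relative generalized Boolean dynamical system $C^*$-algebra. Once this dictionary is set up, the remainder of the proof is a transcription of the arguments in \cite[Section 7]{CaK2} through the identification of Proposition \ref{isom}, which is why the author omits it.
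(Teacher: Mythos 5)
Your proposal is correct and follows essentially the same route as the paper: the paper's entire proof is to invoke the identification $C^*(E,\CL,\CB)\cong C^*(\CB,\CA,\theta,\CI_{r(\af)})$ from Proposition \ref{isom} and then apply \cite[Proposition 7.3]{CaK2} verbatim. Your write-up simply unpacks what that citation entails (verifying the representation relations in the quotient, universality, and the gauge-invariant uniqueness theorem), which matches the argument of \cite[Section 7]{CaK2} that the author deliberately omits.
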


\begin{proof}It  follows by Proposition \ref{isom} and \cite[Proposition 7.3]{CaK2}.
\end{proof}
We further say that the map $(\CH,\CS) \mapsto I_{(\CH,\CS)} $ is a lattice isomorphism. 
The set of pairs $(\CH,\CS)$, where $\CH$ is a hereditary saturated subset of $\CB$ and $\CS$ is an ideal of $\CB_\CH$ with $\CH \cup \CB_{reg}\subseteq \CS $ is a lattice with respect to the order relation defined by $(\CH_1,\CS_1)\le (\CH_2,\CS_2)\iff (\CH_1\subseteq \CH_2 ~\text{and}~ \CS_1\subseteq\CS_2)$. The set of gauge-invariant ideals of $C^*(E, \CL, \CB)$ is a lattice with the order given by set inclusion.


\begin{thm}\label{gauge invariant ideal:characterization}(\cite[Theorem 7.4]{CaK2}) Let  $(E, \CL, \CB)$ be a labeled space.
Then the map $(\CH,\CS) \mapsto I_{(\CH,\CS)} $ is a lattice isomorphism between the lattice of all pairs $(\CH,\CS)$, where $\CH$ is a hereditary  saturated subset of $\CB$ and $\CS$ is an ideal of $\CB_\CH$ with $\CH \cup \CB_{reg} \subseteq \CS $,  and the lattice of all  gauge-invariant ideals of $C^*(E, \CL,\CB)$.
\end{thm}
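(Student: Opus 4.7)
The plan is to transfer the statement across the isomorphism of Proposition \ref{isom}. That isomorphism $\phi: C^*(\CB,\CA,\theta,\CI_{r(\af)}) \to C^*(E,\CL,\CB)$ sends $p_A \mapsto p_A$ and $s_{\af,B} \mapsto s_\af p_B$, and therefore intertwines the two gauge actions. Consequently it restricts to a bijection between gauge-invariant ideals on the two sides. Moreover, since $\theta_\af(A)=r(A,\af)$, the notions of hereditary saturated subset of $\CB$, of regular element, and of the ideal $\CB_\CH$ all coincide in the labeled-space and Boolean-dynamical pictures; the parameter lattice is literally the same. Hence everything reduces to the analogous classification [CaK2, Theorem 7.4] for $C^*(\CB,\CA,\theta,\CI_{r(\af)})$, and what remains is to repackage that statement using the generators $\{s_\af, p_A\}$.

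First I would check that the map $(\CH,\CS)\mapsto I_{(\CH,\CS)}$ is well-defined into gauge-invariant ideals: each generator $p_A - \sum_{\af \in \Delta_{[A]}} s_\af p_{r(A,\af)}s_\af^*$ is fixed by $\gamma_z$, so $I_{(\CH,\CS)}$ is gauge-invariant. Injectivity follows by recovering $(\CH,\CS)$ from $I_{(\CH,\CS)}$: applying Proposition \ref{isomorphism to quotient} identifies the quotient $C^*(E,\CL,\CB)/I_{(\CH,\CS)}$ with $C^*(\CB/\CH,\CA,\theta,[\CI_{r(\af)}];[\CS])$ via $p_A + I_{(\CH,\CS)} \mapsto p_{[A]}$. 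In the universal representation of the target, $p_{[A]}\neq 0$ whenever $[A]\neq [\emptyset]$, which forces $\CH_{I_{(\CH,\CS)}}=\CH$; a parallel argument applied to the class of $p_A - \sum_{\af \in \Delta_{[A]}} s_\af p_{r(A,\af)} s_\af^*$ in the quotient (where it becomes the Cuntz--Krieger defect at $[A]$, and thus vanishes precisely when $[A]\in [\CS]$) yields $\CS_{I_{(\CH,\CS)}}=\CS$.

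For surjectivity, let $I$ be an arbitrary gauge-invariant ideal of $C^*(E,\CL,\CB)$. By Lemma \ref{her-sat from ideal}, the pair $(\CH_I,\CS_I)$ is a hereditary saturated subset of $\CB$ together with an ideal of $\CB_{\CH_I}$ satisfying $\CH_I\cup\CB_{reg}\subseteq\CS_I$. The containment $I_{(\CH_I,\CS_I)}\subseteq I$ is immediate from the definitions of $\CH_I$ and $\CS_I$, and the reverse inclusion is the content of the equivalence $(1)\Leftrightarrow(3)$ in Proposition \ref{isomorphism to quotient}. Finally, the order relation is preserved: $(\CH_1,\CS_1)\leq(\CH_2,\CS_2)$ gives $I_{(\CH_1,\CS_1)}\subseteq I_{(\CH_2,\CS_2)}$ directly from the generators, and conversely $I_{(\CH_1,\CS_1)}\subseteq I_{(\CH_2,\CS_2)}$ implies $\CH_1\subseteq\CH_2$ and $\CS_1\subseteq\CS_2$ by the recovery formulas for $\CH_I$ and $\CS_I$.

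The main point requiring care is verifying the identity $\CS_{I_{(\CH,\CS)}}=\CS$, which amounts to showing that the canonical quotient representation detects exactly the Cuntz--Krieger defects indexed by $\CS$; this is where the gauge-invariant uniqueness theorem for relative generalized Boolean dynamical systems enters, and it is already packaged into Proposition \ref{isomorphism to quotient}. Once the dictionary via Proposition \ref{isom} is in place, every other step is a direct translation.
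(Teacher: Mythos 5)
Your proposal is correct and takes essentially the same route as the paper, which proves this theorem simply by invoking the isomorphism of Proposition \ref{isom} to reduce to \cite[Theorem 7.4]{CaK2}; the extra detail you supply (recovering $(\CH,\CS)$ from the ideal via Lemma \ref{her-sat from ideal} and Proposition \ref{isomorphism to quotient}) is just an unpacking of that cited argument. No gaps.
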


\begin{proof}It  follows by Proposition \ref{isom} and \cite[Theorem 7.4]{CaK2}.
\end{proof}

\begin{ex} Let $(E, \CL)$ be the following labeled graph
 \vskip 1pc \hskip 9pc \xy /r0.5pc/:
 (0,0)*+{\bullet}="V1";
 (10,0)*+{\bullet}="V2";
  "V1";"V1"**\crv{(0,0)&(-3, 3)&(-6,0)&(-3,-3)&(0,0)};?>*\dir{>}\POS?(.5)*+!D{};
  "V2";"V2"**\crv{(10,0)&(13, 3)&(16,0)&(13,-3)&(10,0)};?>*\dir{>}\POS?(.5)*+!D{};
   "V1";"V2"**\crv{(0,0)&(5,6)&(10,0)};?>*\dir{>}\POS?(.5)*+!D{};
    "V1";"V2"**\crv{(0,0)&(5,4.5)&(10,0)};?>*\dir{>}\POS?(.5)*+!D{};
     "V1";"V2"**\crv{(0,0)&(5,-4.5)&(10,0)};?>*\dir{>}\POS?(.5)*+!D{};
   "V1";"V2"**\crv{(0,0)&(5,-6)&(10,0)};?>*\dir{>}\POS?(.5)*+!D{};
 (0,-1.3)*+{v},(10,-1.3)*+{w}, (-6,0)*+{a}, (16,0)*+{a},(5,0.5)*+{\vdots},(5,4.5)*+{(c_n)_{n \geq 1}},
 \endxy
 \vskip 1pc
 \noindent ,where $v$ emits infinitely many labeled edges $(c_n)_{n \geq 1}$. 
 Then $\CE=\{\emptyset, \{v\}, \{w\}, \{v,w\}\}$, $\CE_{reg}=\{\emptyset, \{w\}\}$, and 
 $\CH(\{w\})=\{\emptyset, \{w\}\}$. Since $r(\{v\}, a^n) \notin \CH(\{w\})$ for all $n \geq 1$, it follows that $\{v\} \notin \CS(\CH(\{w\}))$. Thus, $\CS(\CH(\{w\}))=\{\emptyset, \{w\}\}$.
 
Put $\CH:=\CS(\CH(\{w\}))=\{\emptyset, \{w\}\}$. Then $\CE/\CH=\{[\emptyset], [\{v\}]\}=(\CE/\CH)_{reg}$ and 
\begin{align}\label{BH}
\CE_{\CH}=\{ A \in \CE : [A] \in (\CE/\CH)_{reg} \}=\{\emptyset, \{v\}, \{w\}, \{v,w\}\}=\CE.
\end{align}
Let $I_{(\CH, \CE_{\CH})}$ denote the only one gauge-invariant ideal of $C^*(E, \CL, \CE):=C^*(p_A, s_{\af})$ generated by the family of projections 
$$
\biggl\{p_A-\sum_{\af \in \Delta_{[A]}} s_\af p_{r(A, \af)}s_\af^*: A \in \CE_{\CH}\biggr\}.	
$$
Note that $[\CE_\CH]=\{[\emptyset], [\{v\}]\}=(\CE/\CH)_{reg}$. Then, by Proposition \ref{isomorphism to quotient}, we have that  
 that $$C^*(E,\CL,\CE)/I_{(\CH, \CE_{\CH})} \cong C^*(\CE / \CH, \CA, \theta, [\CI_{r(\af)}]).$$  
Since  $C^*(\CE / \CH, \CA, \theta, [\CI_{r(\af)}]) $ is generated by  $\{ p_{[\{v\}]}, s_{a, [\{v\}]}\}$ satisfying  $s_{a, [\{v\}]}^*s_{a, [\{v\}]}=p_{[\{v\}]}=s_{a, [\{v\}]}s_{a, [\{v\}]}^*$, we conclude that 
 $C^*(E,\CL,\CE)/I_{(\CH, \CE_{\CH})}$ is isomorphic to 
 $ C(\T)$.  
\end{ex}

\section{Simple labeled graph $C^*$-algebras} In this section, we introduce strong cofinality, minimality and disagreeablity for an arbitrary labeled space $(E, \CL,\CE)$, and state our main result, Theorem \ref{equiv:simple}.
\subsection{Strong cofinality and minimality} 
 We  first recall the notion of strong cofinality given \cite[Definition 3.1]{JK}. A set-finite and receiver set-finite labeled space $(E, \CL,\CE)$ with  $E$ having no sinks or sources
was called strongly cofinal if for all $x \in \CL(E^\infty)$, $[v]_l \in \CE$ and $w \in s(x)$, there are $N \geq 1$ and a finite number of paths $\ld_1, \cdots, \ld_m \in \CL^*(E)$
such that $$r([w]_1, x_{[1,N]}) \subset \cup_{i=1}^m r([v]_l, \ld_i).$$

It is shown in \cite[Theorem 3.16]{JK} that if $(E, \CL,\CE)$ is strongly cofinal in the above sence and is disagreeable, then $C^*(E, \CL,\CE)$ is simple. 
But, the following example shows that  there is a strongly cofinal labeled space in the sense of \cite[Definition 3.1]{JK} and is disagreeable, but its associated $C^*$-algebra is not simple.

\begin{ex}\label{counter ex} Consider the following labeled graph $(E, \CL)$:
\vskip 1pc 
 \hskip 1.1cm
\xy  /r0.3pc/:(-44.2,0)*+{\cdots};(-44.3,-20)*+{\cdots };
(-40,0)*+{\bullet}="V-4";
(-30,0)*+{\bullet}="V-3";
(-20,0)*+{\bullet}="V-2";
(-10,0)*+{\bullet}="V-1"; 
(0,0)*+{\bullet}="V0";
(10,0)*+{\bullet}="V1"; 
(20,0)*+{\bullet}="V2";
(30,0)*+{\bullet}="V3";
(40,0)*+{\bullet}="V4";
(40,-10)*+{\bullet}="W1";
(40,-20)*+{\bullet}="W2";
(30,-20)*+{\bullet}="W3";
(20,-20)*+{\bullet}="W4";
(10,-20)*+{\bullet}="W5";
(0,-20)*+{\bullet}="W6";
(-10,-20)*+{\bullet}="W7";
(-20,-20)*+{\bullet}="W8";
(-30,-20)*+{\bullet}="W9";
(-40,-20)*+{\bullet}="W10";
 "V-4";"V-3"**\crv{(-40,0)&(-30,0)}; ?>*\dir{>}\POS?(.5)*+!D{};
 "V-3";"V-2"**\crv{(-30,0)&(-20,0)}; ?>*\dir{>}\POS?(.5)*+!D{};
 "V-2";"V-1"**\crv{(-20,0)&(-10,0)}; ?>*\dir{>}\POS?(.5)*+!D{};
 "V-1";"V0"**\crv{(-10,0)&(0,0)}; ?>*\dir{>}\POS?(.5)*+!D{};
 "V0";"V1"**\crv{(0,0)&(10,0)}; ?>*\dir{>}\POS?(.5)*+!D{};
 "V1";"V2"**\crv{(10,0)&(20,0)}; ?>*\dir{>}\POS?(.5)*+!D{};
 "V2";"V3"**\crv{(20,0)&(30,0)}; ?>*\dir{>}\POS?(.5)*+!D{};
 "V3";"V4"**\crv{(30,0)&(40,0)}; ?>*\dir{>}\POS?(.5)*+!D{};
 "V3";"V4"**\crv{(30,0)&(40,0)}; ?>*\dir{>}\POS?(.5)*+!D{};
"V4";"W1"**\crv{(40,0)&(40,-10)}; ?>*\dir{>}\POS?(.5)*+!D{};
"W1";"W2"**\crv{(40,-10)&(40,-20)}; ?>*\dir{>}\POS?(.5)*+!D{};
"W2";"W3"**\crv{(40,-20)&(30,-20)}; ?>*\dir{>}\POS?(.5)*+!D{};
"W3";"W4"**\crv{(30,-20)&(20,-20)}; ?>*\dir{>}\POS?(.5)*+!D{};
"W4";"W5"**\crv{(20,-20)&(10,-20)}; ?>*\dir{>}\POS?(.5)*+!D{};
"W5";"W6"**\crv{(10,-20)&(0,-20)}; ?>*\dir{>}\POS?(.5)*+!D{};
"W6";"W7"**\crv{(0,-20)&(-10,-20)}; ?>*\dir{>}\POS?(.5)*+!D{};
"W7";"W8"**\crv{(-10,-20)&(-20,-20)}; ?>*\dir{>}\POS?(.5)*+!D{};
"W8";"W9"**\crv{(-20,-20)&(-30,-20)}; ?>*\dir{>}\POS?(.5)*+!D{};
"W9";"W10"**\crv{(-30,-20)&(-40,-20)}; ?>*\dir{>}\POS?(.5)*+!D{};
 (-35,1.5)*+{1};(-25,1.5)*+{0};
 (-15,1.5)*+{1};(-5,1.5)*+{1};(5,1.5)*+{1};
 (15,1.5)*+{0};(25,1.5)*+{1};(35,1.5)*+{1};
(42,-5)*+{0};(42,-15)*+{1};(43,-10)*+{v_0};
(35,-22.5)*+{0};(25,-22.5)*+{0};
(15,-22.5)*+{1};(5,-22.5)*+{0};(-5,-22.5)*+{0};(-5,-22.5)*+{0};(-15,-22.5)*+{0};
(-25,-22.5)*+{1};(-35,-22.5)*+{0};
\endxy 
\vskip 1pc 
\noindent 
Then $(E, \CL,\CE)$ is set-finite, receiver set-finite and left-resolving.   For every vertex $v \in E^0$, there is $l > 0$ and $\af \in \CL^{l}(E)$ such that $r(\af)=[v]_{l}=\{v\} \in \CE$.
Thus, one sees that $\CE=\{ \cup_{i=1}^n r(\af_i): \af_i \in \CL^*(E) ~\text{and}~ n \in \N \}.$
It then easy to see that $(E, \CL,\CE)$ is strongly cofinal in the sense of  \cite[Definition 3.1]{JK}. 

On the other hand, consider the smallest hereditary saturated set $\CH$ containing $r(1)$. One can see that $r(0) \notin \CH$. So, $\CH$ is a proper hereditary saturated subset of $\CE$. Thus, by \cite[Theorem 5.2]{JKP}, there is a nontrivial ideal of $C^*(E, \CL,\CE)$, so that $C^*(E, \CL,\CE)$ is not simple. In fact, $C^*(E, \CL,\CE)$ contains many ideals that is not gauge-invariant. 
To see this, note that for each $\af \in \CL^*(E)$, if $\af=1\af'$ or $\af=\af'1$ for some $\af'$, then $r(\af) \in \CH$. Observe also that $r(0) \sim r(0^n)$ for each $n \in \N$ since  $r(0) \cup \big(r(10)\cup \cdots \cup r(10^{n-1}) \big)  =r(0^n) \cup  \big(r(10)\cup \cdots \cup r(10^{n-1}) \big)$ for each $n \in \N$.  So, we have $\CE/ \CH=\{[r(0)], [\emptyset]\}$.
 One then can see that $C^*(E,\CL,\CE)/I_{\CH} \cong C^*(\CE/\CH, \CA, \theta) \cong C(\T)$, where $\CA=\{0,1\}$ Thus, $C^*(E, \CL,\CE)$ contains many ideals that is not gauge-invariant. 
  \end{ex}

As we see in the above example, we need to modify the definition of strong cofinality given in \cite[Definition 3.1]{JK}. To do that, let $$\overline{\CL(E^\infty)}:=\{x\in \CA^{\mathbb N}\mid 
x_{[1,n]}\in \CL(E^n) \ \text{for all } n\geq 1\}$$ 
be the set of all right infinite sequences $x$ such that 
all of its subpaths occurs as a labeled path in $(E,\CL)$.
Clearly $\CL(E^\infty)\subset \overline{\CL(E^\infty)}$, and 
in fact, $\overline{\CL(E^\infty)}$ is the closure of $\CL(E^\infty)$ in the 
totally disconnected perfect space $\CA^{\mathbb N}$ 
which has the topology with  a countable 
basis of open-closed cylinder sets 
$Z(\af):=\{x\in \CA^{\mathbb N}: x_{[1,n]}=\af\}$, $\af\in \CA^n$, $n\geq 1$ 
(see Section 7.2 of \cite{Ki}).  
 In general, $\CL(E^\infty)\subsetneq \overline{\CL(E^\infty)}$. For example, in Example \ref{counter ex}, we see that $1^{\infty}, 0^{\infty} \notin \CL(E^\infty)$, but  $ 1^{\infty}, 0^{\infty}  \in \overline{\CL(E^\infty)}$. 

Adopting \cite[Definition 2.10]{JP2018}, we introduce  strong cofinality of an arbitrary labeled space.  
\begin{dfn}\label{strongly cofinal}
 We say that a labeled space $(E,\CL,\CE)$ is {\it strongly cofinal} if for each nonempty set $A\in \CE $ and   $x\in \overline{\CL(E^\infty)}$, there exist $N \in \N$ and a finite number of paths 
$\ld_1, \dots, \ld_m\in \CL^*(E)$ such that 
$$r(x_{[1,N]})\subseteq \cup_{i=1}^m r(A,\ld_i).$$ 
\end{dfn}
Throughout the paper if we mention strong cofinality, we mean  Definition  \ref{strongly cofinal}. 
 
 \begin{ex} We continue Example \ref{counter ex}. 
For $x:=0^{\infty} \in   \overline{\CL(E^\infty)} \setminus \CL(E^\infty)$ and $\{v_0\} \in \CE$, we see that
$r(x_{[1,n]})=r(0^n) \nsubseteq \cup_{i=1}^m r(\{v_0\},\ld_i) $
for any paths 
 $\ld_1, \dots, \ld_m\in \CL^*(E)$ and any $n \in \N$.
 Thus, $(E, \CL,\CE)$ is not strongly cofinal.
\end{ex}

\begin{remark}\label{sc:remark} Let $(E,\CL,\CE)$ be a labeled space.
\begin{enumerate}
\item  $(E,\CL,\CE)$ is strongly cofinal if and only if  for all $\emptyset \neq A \in \CE$, $B \in  \CE $ and $x \in \overline{\CL(E^\infty)}$, there exists 
an $N \geq 1$  such that 
$r(B, x_{[1,N]}) \in \CH(A).$ 
\item If $E$ has no sources and $(E,\CL, \CE)$ is set-finite and receiver set-finite, then  $(E,\CL, \CE)$ is strongly cofinal if and only if   for each $[v]_l\in \CE $, $x\in \overline{\CL(E^\infty)}$ and $w \in s(x)$, there exist an $N \geq 1$ and a finite number of paths 
$\ld_1, \dots, \ld_m\in \CL^*(E)$ such that 
$r([w]_1, x_{[1,N]})\subseteq \cup_{i=1}^m r([v]_l,\ld_i).$ 
\end{enumerate}
\end{remark}

\begin{proof}(1)($\Rightarrow$) It is clear.

($\Leftarrow$) Let $A \in  \CE$ and $x=x_1x_2 \cdots \in \overline{\CL(E^\infty)} $. Then for $r(x_1) \in \CE$ and $x_2x_3 \cdots \in \overline{\CL(E^\infty)} $, there exist $N \geq 1$ and a finite number of paths 
$\ld_1, \dots, \ld_m\in \CL^*(E)$ such that
$$r(x_1 \cdots x_N)=r(r(x_1), x_2x_3 \cdots x_N) \subseteq \cup_{i=1}^m r(A,\ld_i).$$ 

\noindent
(2) ($\Rightarrow$) It is clear since $r([w]_1, x_{[1,N]})\subseteq r(x_{[1,N]})$ for any  $w \in s(x)$.

($\Leftarrow$)
Let $A \in \CE$ and $x=x_1x_2 \cdots \in \overline{\CL(E^\infty)}$. We may assume that $A=[v]_l$ for some $v \in E^0$. Since $(E, \CL,\CE)$ is receiver set-finite, $r(x_1) = \cup_{i=1}^k [w_i]_1$. Thus  
if $[w_i]_1 \cap s(x_2)  \neq \emptyset$ for $i \in \{1, \cdots, k\} $, there are $N_i \geq 1$ and $\ld^i_1, \cdots, \ld^i_{m_i} \in \CL^*(E)$ such that
$$r([w_i]_1,x_2 \cdots x_{N_i} ) \subset \cup_{j=1}^{m_i} r(A, \ld^i_j).$$
Choose $N=\max \{N_i : [w_i]_1 \cap s(x_2) \neq \emptyset \}$. Then for all $i$ with $[w_i]_1 \cap s(x_2) \neq \emptyset$, 
$$r([w_i]_1,x_2 \cdots x_{N} ) \subset \cup_{j=1}^{m_i} r(A, \bt^i_j),$$
where $\bt_j^i=\ld_j^i(\bt_j^i)'$. Thus 
$$r(x_1 \cdots x_{N} )=r(r(x_1), x_2 \cdots x_N) \subset \cup_{i=1}^k r([w_i]_1,x_1 \cdots x_{N} ) \subset \cup _{i=1}^k\cup_{j=1}^{m_i} r(A, \bt^i_j).$$
\end{proof}

\begin{dfn}\label{minimal} We say that $(E, \CL, \CE)$ is {\it minimal} if $\{\emptyset\}$ and $\CE$ are the only hereditary saturated subsets of $\CE$. 
\end{dfn}

 We now describe a number of equivalent conditions of minimality.   The ideal structure of $C^*(E,\CL,\CE)$ will be  used to prove theorem.  
 \begin{thm} \label{equiv:minimal}
Let $(E,\CL,\CE)$ be a labeled space. The following are equivalent.
\begin{enumerate}
\item $(E,\CL,\CE)$ is minimal. 
\item $\CS(\CH(A))=\CE$ for every $A\in\CB\setminus\{\emptyset\}$.
\item $(E,\CL,\CE)$ is strongly cofinal and for any  $ A \in \CE \setminus \{\emptyset\}$ and $B \in \CE$, there exists  $C \in \CE_{reg}$ such that  $B \setminus C\in \CH(A)$. 
\item The only ideal of $C^*(E,\CL,\CE)$ containing $p_A$ for some $A \in \CE \setminus \{\emptyset\}$ is $C^*(E,\CL,\CE)$.
\item The only non-zero ideal of $C^*(E,\CL,\CE)$ which is gauge-invariant is $C^*(E,\CL,\CE)$.
 \end{enumerate}
 \end{thm}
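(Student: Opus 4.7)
The plan is to establish the cycle $(1) \Leftrightarrow (2)$, $(1) \Leftrightarrow (5) \Leftrightarrow (4)$, and $(1) \Leftrightarrow (3)$, leaning on Lemma \ref{hereditary saturated set} and Theorem \ref{gauge invariant ideal:characterization} for the easy equivalences, and reserving a genuine construction for $(3)\Rightarrow(1)$.

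For $(1)\Leftrightarrow(2)$, I would just invoke Lemma \ref{hereditary saturated set}: $\CS(\CH(A))$ is the smallest hereditary saturated set containing $A$, so minimality forces it to equal $\CE$ for every nonempty $A$; conversely any nontrivial hereditary saturated $\CH$ yields a nonempty $A\in\CH$ with $\CS(\CH(A))\subseteq\CH\subsetneq\CE$. For $(1)\Leftrightarrow(5)$ I would compute, using Theorem \ref{gauge invariant ideal:characterization}, that the only admissible pair with $\CH=\{\emptyset\}$ is $(\{\emptyset\},\CE_{reg})$ (since $\CE_{\{\emptyset\}}=\CE_{reg}$ and this pair yields the zero ideal) and the only pair with $\CH=\CE$ is $(\CE,\CE)$ (yielding the full algebra). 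So minimality is equivalent to having only these two pairs. For $(1)\Rightarrow(4)$, given an ideal $I$ containing $p_A$ for some nonempty $A$, the ideal generated by the gauge-invariant element $p_A$ is itself gauge-invariant, lies in $I$, and corresponds to a pair $(\CH_J,\CS_J)$ with $A\in\CH_J$; by minimality $\CH_J=\CE$, forcing $J=C^*(E,\CL,\CE)$. Finally $(4)\Rightarrow(5)$ is immediate because any nonzero gauge-invariant ideal $I$ must have $\CH_I\neq\{\emptyset\}$ (else $I=I_{(\{\emptyset\},\CE_{reg})}=0$ as just noted), hence contains some $p_A\neq 0$.

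For $(1)\Rightarrow(3)$, I would use (2): strong cofinality follows from Remark \ref{sc:remark}(1), since $B\in\CS(\CH(A))=\CE$ yields an $n$ with $r(B,\bt)\in\CH(A)$ for all $\bt\in\CL^n(E)$, in particular for $\bt=x_{[1,n]}$. The second clause follows by taking $\gm=\epsilon$ in the definition of $\CS(\CH(A))$: write $B=B'\cup C$ with $B'\in\CH(A)$ and $C\in\CE_{reg}$, so $B\setminus C\subseteq B'$ lies in $\CH(A)$ by hereditariness, and $B\setminus C\in\CE$ since $\CE$ is closed under relative complements.

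The main step is $(3)\Rightarrow(1)$. Suppose $\CH$ is hereditary saturated with $\{\emptyset\}\subsetneq\CH\subsetneq\CE$, pick $A\in\CH\setminus\{\emptyset\}$ and $B\in\CE\setminus\CH$, and apply clause (c) of (3) to obtain $C\in\CE_{reg}$ with $B\setminus C\in\CH(A)\subseteq\CH$; since $B\notin\CH$, necessarily $C\notin\CH$. I would then build inductively a sequence $C=C_0,C_1,C_2,\dots$ in $\CE_{reg}\setminus\CH$ and single letters $a_1,a_2,\dots\in\CA$ as follows: by saturation of $\CH$ applied to $C_{i-1}\in\CE_{reg}\setminus\CH$ there exists $\af\in\CL^*(E)$ with $r(C_{i-1},\af)\notin\CH$, and writing $\af=a_ia_2'\cdots a_k'$, hereditariness (closure under relative ranges) forces $r(C_{i-1},a_i)\notin\CH$; then clause (c) gives $C_i\in\CE_{reg}$ with $C_i\subseteq r(C_{i-1},a_i)$ and $r(C_{i-1},a_i)\setminus C_i\in\CH$, so $C_i\notin\CH$. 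By induction $C_i\subseteq r(C_0,a_1\cdots a_i)$, and since each $C_i\neq\emptyset$ we have $a_1a_2\cdots\in\overline{\CL(E^\infty)}$. Strong cofinality, in the form of Remark \ref{sc:remark}(1), then produces $N$ with $r(C_0,a_1\cdots a_N)\in\CH(A)\subseteq\CH$, whence $C_N\in\CH$ by hereditariness, contradicting $C_N\notin\CH$. The subtle point here, and the step I would check most carefully, is the reduction from an arbitrary $\af$ of length $\geq 1$ witnessing failure of saturation to a single-letter $a_i$, which uses that hereditary sets are closed under relative ranges so that the first letter alone already witnesses failure.
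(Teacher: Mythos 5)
Your proposal is correct and follows essentially the same route as the paper: $(1)\Leftrightarrow(2)$ via Lemma \ref{hereditary saturated set}, $(2)\Rightarrow(3)$ by unpacking the description of $\CS(\CH(A))$, the inductive construction of a labeled path $a_1a_2\cdots\in\overline{\CL(E^\infty)}$ with $C_i\in\CE_{reg}\setminus\CH$ contradicting strong cofinality for $(3)\Rightarrow(1)$, and the gauge-invariant ideal lattice of Theorem \ref{gauge invariant ideal:characterization} for $(4)$ and $(5)$. The only cosmetic difference is that you route $(1)\Rightarrow(4)$ through the gauge-invariant subideal generated by $p_A$, whereas the paper applies Lemma \ref{her-sat from ideal} directly to the arbitrary ideal $I$; both work.
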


\begin{proof} (1)$\implies$(2): It is obvious.

(2)$\implies$(3):  Choose $\emptyset \neq A \in \CE$, $B \in \CE$ and $x=x_1x_2 \cdots \in \overline{\CL(E^\infty)}$.  Then $B\in\CS(\CH(A))$. It follows from the description of $\CS(\CH(A))$ givne in Lemma~\ref{hereditary saturated set} that there is an $n\geq 0$ such that $r(B, \bt)\in\CH(A)$ for all $\beta\in\CL^n(E)$, and $r(B, \gm)\in\CH(A)\oplus\CE_\reg$ for all $\gamma\in\CL^\#(E)$ with $|\gamma|<n$. If $n=0$ and we let $C=\emptyset$, then $C\in\CE_\reg$, $B\setminus C=B\in\CH(A)$. If $n>0$, then $r(B, x_{[1,n]})\in\CH(A)$ and there is a $C\in\CE_\reg$ such that $B\setminus C\in\CH(A)$. Thus, (3) holds (see Remark \ref{sc:remark}(1)).

 (3)$\implies$(1):
Choose a nonempty hereditary saturated subset $\CH$ of $\CE$.  Take $\emptyset \neq A \in \CH$.  Suppose that  $B \notin \CH$ for a set $B \in \CE$. 
Then there is a $C_1 \in \CE_{reg}$ such that $B \setminus C_1 \in \CH(A) (\subset \CH)$. Since $B \notin \CH$, we have $C_1 \notin \CH$. So,
 there is $x_1 \in \CL(E^1)$  such that $r(C_1, x_1) \notin \CH$ since $\CH$ is saturated. 
 We can then choose $C\in\CE_\reg$ such that $r(C_1, x_1)\setminus C\in\CH(A)$.
  Let $C_2:=C\cap r(C_1, x_1)$. Since $r(C_1, x_1) \notin\CH$, it follows that $C_2\notin\CH$. Since $C_2\in\CE_\reg$, we deduce that there is an $x_2\in\CL(E^1)$ such that
      $r(C_2, x_2)\notin\CH$.
       Continuing  this process, we can construct a sequence $(C_n,x_n)_{n\in\N}$ such that $C_n\in\CE_\reg\setminus\CH$, $ x_n\in\CL(E^1)$, $C_{n+1}\subseteq r(C_n, x_n),$ and $r(C_n, x_n)\setminus C_{n+1}\in\CH(A)$  for each $n\in\N$. Let $x:=x_1x_2\cdots   \in \overline{\CL(E^\infty)}$.   Then $C_{n+1}\subseteq  r(C_1, x_{[1,n]})$ for each $n \geq 1$. 
        Since $C_{n+1} \notin \CH$ for all $n \in \N$,  $r(C_1, x_{[1,n]}) \notin \CH$ for all $n \in \N$.    Thus, $r(x_{[1,n]}) \notin \CH(A)$ for each $n \geq 1$, which  contradicts to strong cofinality of $(E, \CL,\CE)$. Thus, we conclude that $\CH=\CE$.

 (1)$\implies$(4):   Let $I$ be an ideal of $C^*(E,\CL,\CE)$ such that $p_A \in I$ for some $\emptyset \neq A \in \CE$. Then $\CH_I=\{A \in \CE: p_A \in I \}$ is a nonempty saturated hereditary subset of $\CE$ (see \cite[Lemma 7.2]{CaK2}). Then $\CH_I=\CE$ by assumption. Thus, $I=C^*(E,\CL,\CE).$
      
   (4)$\implies$(5):      Let $I$ be a nonzero gauge-invariant ideal of $C^*(E,\CL,\CE)$. Then the set $\CH_I=\{A \in \CE: p_A \in I\}$ is nonempty (\cite{JKP}). It means  that $I$ contains a vertex projection. So, $I=C^*(E,\CL,\CE)$.
      
      (5)$\implies$(1): It follows by Theorem \ref{gauge invariant ideal:characterization}.        
 \end{proof}

\begin{prop}\label{suff:minimal} If $(E, \CL,\CE)$ is strongly cofinal and
 for any  $ A \in \CE \setminus \{\emptyset\}$ and $B \in \CE \setminus \CE_{reg}$, we have $B \in \CH(A)$, then  $(E, \CL,\CE)$ is minimal. 
\end{prop}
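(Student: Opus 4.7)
The plan is to apply Theorem~\ref{equiv:minimal}, specifically the implication (3)$\implies$(1). Since strong cofinality is part of the hypothesis, I only need to verify condition (3)'s second clause: for every $A\in\CE\setminus\{\emptyset\}$ and $B\in\CE$, there exists $C\in\CE_{reg}$ such that $B\setminus C\in\CH(A)$.

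The verification splits into two trivial cases based on whether or not $B$ is regular. First I would note that $\emptyset\in\CE_{reg}$ (vacuously, since there is no nonempty $B'\in\CE$ with $B'\subseteq\emptyset$), and that $\emptyset\in\CH(A)$ for every $A\in\CE$ (since $\emptyset\subseteq r(A,\epsilon)$). Then:

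\textbf{Case 1:} If $B\in\CE_{reg}$, take $C:=B\in\CE_{reg}$. Since $\CE$ is non-degenerate it is closed under relative complements, so $B\setminus C=\emptyset\in\CH(A)$.

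\textbf{Case 2:} If $B\in\CE\setminus\CE_{reg}$, the standing hypothesis gives $B\in\CH(A)$ directly. Take $C:=\emptyset\in\CE_{reg}$, whence $B\setminus C=B\in\CH(A)$.

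Together with strong cofinality, this yields condition (3) of Theorem~\ref{equiv:minimal}, and hence minimality. There is no real obstacle here; the statement is essentially a specialization of the second clause of Theorem~\ref{equiv:minimal}(3), where the hypothesis bypasses the need to find a nontrivial regular ``absorber'' $C$ by forcing $B$ to lie in $\CH(A)$ outright whenever $B$ fails to be regular.
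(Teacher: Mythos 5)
Your proof is correct, but it takes a different route from the paper's. The paper proves this proposition directly: it fixes a nonempty hereditary saturated set $\CH$ and a nonempty $A\in\CH$, observes that any $B\notin\CH$ must be regular (since a singular $B$ would lie in $\CH(A)\subseteq\CH$ by hypothesis), uses saturation to produce a letter $x_1$ with $r(B,x_1)\notin\CH$ (hence again regular), and iterates to build $x\in\overline{\CL(E^\infty)}$ with $r(x_{[1,n]})\notin\CH$ for all $n$, contradicting strong cofinality. You instead reduce to the already-established implication (3)$\implies$(1) of Theorem~\ref{equiv:minimal}, checking its second clause by the two-case choice $C:=B$ (when $B\in\CE_{reg}$, giving $B\setminus C=\emptyset\in\CH(A)$) and $C:=\emptyset$ (when $B\notin\CE_{reg}$, where the hypothesis gives $B\in\CH(A)$ outright); the facts you need --- $\emptyset\in\CE_{reg}$ and $\emptyset\in\CH(A)$ --- both hold, and there is no circularity since Theorem~\ref{equiv:minimal} precedes the proposition and its proof does not use it. Your reduction is shorter and makes transparent that the proposition is a strict specialization of condition (3), whereas the paper's direct argument is self-contained at the cost of essentially repeating the path-construction from its own proof of (3)$\implies$(1).
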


\begin{proof} Choose a nonempty hereditary saturated subset $\CH$ of $\CE$.  Take $\emptyset \neq A \in \CH$.  Note first that if $B \notin \CH$ for a set $B \in \CE$, $B\in \CE_{reg}$; if  $B \in \CE \setminus \CE_{reg}$, then $B \in \CH(A) \subset \CH  $ by assumption, a contradiction. So,  if $B \notin \CH$ for a set $B \in \CE_{reg}$,  there is $x_1 \in \CL(E^1)$  such that $r(B, x_1) \notin \CH$ since $\CH$ is saturated. If $r(B, x_1) \in \CE \setminus \CE_{reg}$, then again $r(B, x_1) \in \CH$, a contradiction.  So, $r(B, x_1) \in \CE_{reg}$. Repeating this process, we see that  there is an infinite path $x=x_1x_2 \cdots \in \overline{\CL(E^\infty)}$ such that $r(B, x_{[1,n]}) \in \CE_{reg} \setminus \CH$ for all $n \in \N$.
 Thus, $r(x_{[1,n]}) \notin \CH$ for all $n \in \N$ since $r(B,x_{[1,n]} ) \subset r(x_{[1,n]})$. 
  It then  contradicts to strong cofinality of $(E, \CL,\CE)$. Thus, we conclude that $\CH=\CE$.
\end{proof}
The converse of Proposition \ref{suff:minimal} is not true. 

\begin{ex} Consider the following labeled graph $(E, \CL)$:
\vskip 1pc \hskip 12pc \xy /r0.5pc/:
 (-10,0)*+{\bullet}="V-2";
  (-5,0)*+{\bullet}="V-1";
 (0,0)*+{\bullet}="V1"; 
 "V-2";"V-1"**\crv{(-10,0)&(-5,0)}; ?>*\dir{>}\POS?(.5)*+!D{};
 "V-1";"V1"**\crv{(-5,0)&(0,0)}; ?>*\dir{>}\POS?(.5)*+!D{};
 "V1";"V1"**\crv{(0,0)&(-2,3)&(0,6.2)&(2,3)&(0,0)};?>*\dir{>}\POS?(.5)*+!D{};
"V1";"V1"**\crv{(0,0)&(0.5, 3.4)&(4,4)&(3.8,1)&(0,0)};?>*\dir{>}\POS?(.5)*+!D{};
  (2.7, -1.1)*+{.},(3,-0.3)*+{.},(2.7,0.5)*+{.}, (-5, -1.5)*+{v}, (0, -1.5)*+{w},
  (-8, 1)*+{2}, (-3, 1)*+{1}, (3.5,5)*+{(\af_n)_n}
 \endxy
\vskip 1pc

\noindent ,where $w$ emits infinitely many labeled edges $(\af_n)_{n \geq 1}$.
Then  $\CE=\{ \emptyset, \{v\}, \{w\}, \{v,w\}\} $ and $\CE_{reg}=\{\emptyset, \{v\}\}$.
It is rather obvious that $\CS(\CH(\{v\}))=\CE$ and $\CS(\CH(\{v, w\}))=\CE$. Consider $ \CH(\{w\})$. Then $\CH(\{w\})=\{\emptyset, \{w\}\}$ and  clearly $\{v\} \in \CS(\CH(\{w\}))$. Since $r(\{v,w\}, \bt) \in \CH(\{w\})$ for all $\bt \in \CL(E^1)$ and $\{v, w\}=\{v\} \cup \{w\} \in \CE_{reg} \oplus \CH(\{w\})$, we have  $\{v, w\} \in \CS(\CH(\{w\}))$. Thus, $\CS(\CH(\{w\}))=\CE$. 
Thus,  $(E, \CL, \CE)$ is minimal by Theorem \ref{equiv:minimal}. It is also easy to see that $(E, \CL, \CE)$ is strongly cofinal. But,  $\{v,w\}  \notin \CH(\{w\})$.

\end{ex}

 \begin{cor}\label{cor:minimal} If $E$ has no sinks and $(E,\CL,\CE)$ is set-finite, then 
 $(E,\CL,\CE)$ is minimal if and only if $(E,\CL,\CE)$ is strongly cofinal. 
\end{cor}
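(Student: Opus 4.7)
The plan is to reduce this corollary directly to Theorem~\ref{equiv:minimal} by exploiting the fact, already noted in the paper, that when $E$ has no sinks and $(E,\CL,\CE)$ is set-finite one has $\CE=\CE_{\reg}$. Under this identification every set in $\CE$ is regular, which makes the extra regularity clause appearing in Theorem~\ref{equiv:minimal}(3) vacuously satisfied.

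The forward direction needs nothing new: if $(E,\CL,\CE)$ is minimal, then the implication $(1)\Rightarrow(3)$ of Theorem~\ref{equiv:minimal} gives in particular that $(E,\CL,\CE)$ is strongly cofinal. For the converse, I would assume $(E,\CL,\CE)$ is strongly cofinal and verify the remaining part of Theorem~\ref{equiv:minimal}(3): for each nonempty $A\in\CE$ and each $B\in\CE$, set $C:=B\in\CE_{\reg}$; then $B\setminus C=\emptyset\in\CH(A)$, so the clause holds and Theorem~\ref{equiv:minimal} yields minimality. Equivalently, one could invoke Proposition~\ref{suff:minimal}, whose hypothesis ranges over $B\in\CE\setminus\CE_{\reg}=\emptyset$ and is therefore vacuous.

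There is no substantive obstacle here; the content of the corollary is already contained in Theorem~\ref{equiv:minimal}, and the only point that needs to be invoked is the automatic equality $\CE=\CE_{\reg}$ coming from set-finiteness together with the absence of sinks.
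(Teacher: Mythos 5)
Your argument is correct and rests on the same key observation as the paper's: the absence of sinks together with set-finiteness forces $\CE=\CE_{\reg}$, so the regularity clause in Theorem~\ref{equiv:minimal}(3) (equivalently, the hypothesis of Proposition~\ref{suff:minimal}) is vacuously satisfied and minimality follows from strong cofinality alone, while the forward direction is Theorem~\ref{equiv:minimal}(1)$\Rightarrow$(3). The paper instead writes out the ``strongly cofinal $\Rightarrow$ minimal'' direction as a short direct argument (an infinite path staying outside a proper nonempty hereditary saturated set contradicts strong cofinality), but that is just the specialization of those earlier results, so your reduction is essentially the same proof.
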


\begin{proof}We only need to show "if" part: Note that $\CE=\CE_{reg}$. Choose $\CH$ is a nonempty hereditary saturated subset of $\CE$. If 
$B \notin \CH$ for a set $B \in \CE$,  
 there is an infinite path $x=x_1x_2 \cdots \in \overline{\CL(E^\infty)}$ such that $r(B, x_{[1,n]}) \notin \CH$ for all $n \in \N$ since $\CH$ is saturated.
 Thus, $r(x_{[1,n]}) \notin \CH$ for all $n \in \N$ since $r(B,x_{[1,n]} ) \subset r(x_{[1,n]})$. 
   It  contradicts to strong cofinality of $(E, \CL,\CE)$. Thus, $\CH=\CE$. 
\end{proof}

The following lemma is proved in \cite[Lemma 3.6]{JP2018} for a set-finite and receiver set-finite  labeled space $(E, \CL,\CE)$  with  $E$ having no sinks or sources under the assumption that  $C^*(E,\CL,\CE)$ is simple. But, to prove it they only use  minimality of   $(E, \CL,\CE)$ as a  property of   simplicity of   $C^*(E,\CL,\CE)$. So, we can weaken the assumption as follows. 
The idea of proof is same with \cite[Lemma 3.6]{JP2018}. We have only included the proof of  Lemma \ref{loop}(1) for completeness. 

For a path $\bt:=\bt_1 \cdots \bt_{|\bt|}$, let $\bar{\bt}=\bt\bt\bt \cdots$ denotes the 
infinite repetition of $\bt$ (\cite[Notation 3.5]{JP2018}). we call a path $\bt \in \CL^*(E)$ {\it irreducible}
 if it is not a repetition of its proper initial path. 

\begin{lem}\label{loop}(\cite[Lemma 3.6]{JP2018}) Let $(E,\CL,\CE)$ be a minimal labeled space. If there are a nonempty set $A_0 \in \CE$ and an irreducible path $\bt \in \CL^*$ such that $$\CL(A_0E^{n|\bt|})=\{\bt^n\}$$ for all $n \geq 1$, then the following hold.
\begin{enumerate}
\item There is an $N \geq 1$ such that for all $n \geq N$, 
$$r(A_0, \bar{\bt}_{[1,n]}) \subset \cup_{j=1}^{n-1}r(A_0, \bar{\bt}_{[1,j]}).$$
\item There is an $N \geq 1$ such that for all $k \geq 1$, 
$$r(A_0, \bar{\bt}_{[1,N+k]}) \subset \cup_{j=1}^{N}r(A_0, \bar{\bt}_{[1,j]}).$$
\item There is an $N_0 \geq 1$ such that for all $k \geq 1$, 
\begin{align*}r(A_0, \bt^{N+k}) \subset \cup_{j=1}^{N_0} r(A_0, \bt^j).
\end{align*}
Moreover, $A=r(A,\bt)$ for $A:=\cup_{i=1}^{N_0} r(A_0,\bt^j)$.
\end{enumerate}
\end{lem}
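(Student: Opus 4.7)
Our plan is to adapt the strategy of \cite[Lemma 3.6]{JP2018}. In our setting only part~(1) requires a new argument; parts~(2) and~(3) then follow exactly as in \cite{JP2018}, so we concentrate on part~(1).

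Write $B_n := r(A_0, \bar{\bt}_{[1,n]})$, so $B_0 = A_0$ and $B_{k|\bt|} = r(A_0, \bt^k)$. The crucial preparatory fact, extracted from the hypothesis $\CL(A_0E^{n|\bt|})=\{\bt^n\}$, is that every $\af\in\CL^\#(E)$ with $r(A_0,\af)\neq\emptyset$ is an initial segment of $\bar{\bt}$: writing $|\af|=q|\bt|+r$ with $0\leq r<|\bt|$ and extending a labeled path realizing $\af$ (through any vertex in $r(A_0,\af)$ that admits further emissions) to length $(q+1)|\bt|$ forces the extension to be labeled by $\bt^{q+1}$, so $\af=\bar{\bt}_{[1,|\af|]}$. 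Consequently
\[
\CH(A_0)=\{B\in\CE : B\subseteq \textstyle\bigcup_{j=0}^M B_j \text{ for some } M\geq 0\}.
\]

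For part~(1) we argue by contradiction: if~(1) fails, then, using non-degeneracy of $\CE$, we extract an increasing sequence $n_1<n_2<\cdots$ and nonempty sets $E_k:=B_{n_k}\setminus\bigcup_{j<n_k}B_j\in\CE$. The goal is to contradict $\CS(\CH(E_1))=\CE$, which holds by Theorem~\ref{equiv:minimal} since $E_1\neq\emptyset$. To do so, we verify that $A_0\notin\CS(\CH(E_1))$: any labeled path from $E_1$ with nonempty range is an initial segment of $\bar{\bt}$ shifted by $n_1$, so $\CH(E_1)$ is controlled by $\{B_j:j\geq n_1\}$; but no finite witness $n$ (even with the $\oplus\CE_\reg$ correction from Lemma~\ref{hereditary saturated set}) covers $A_0$, because the sets $B_{n_k}$ for $k\geq 2$ keep contributing ``fresh'' vertices disjoint from all earlier $B_j$. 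The main obstacle is making this obstruction precise when both $A_0$ and the $B_n$ may be singular: the $\oplus\CE_\reg$ correction in the definition of $\CS(\cdot)$ must be tracked using the weak left-resolving property to ensure that ``regular'' pieces also avoid covering $A_0$.

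Parts~(2) and~(3) follow as in \cite[Lemma 3.6]{JP2018}. Part~(2) is an induction on $k$: with $N$ as in~(1), $B_{N+1}\subseteq\bigcup_{j\leq N}B_j$, and then $B_{N+k+1}\subseteq\bigcup_{j\leq N+k}B_j\subseteq\bigcup_{j\leq N}B_j$. For part~(3), specialize to $D_k:=B_{k|\bt|}=r(A_0,\bt^k)$ and run the argument of~(1) on the subsequence $(D_k)$ (the hypothesis is preserved, since $\CL(A_0E^{k|\bt|})=\{\bt^k\}$ translates directly to the $D$-orbit), obtaining $N_0$ such that $D_{N+k}\subseteq\bigcup_{j=1}^{N_0}D_j$ for all $k\geq 1$. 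Setting $A:=\bigcup_{j=1}^{N_0}D_j$ and using $r(D_j,\bt)=D_{j+1}$ gives $r(A,\bt)=\bigcup_{j=2}^{N_0+1}D_j\subseteq A$; enlarging $N_0$ to capture the eventual period of the $\theta_\bt$-orbit on the absorbing union yields the reverse inclusion, hence $A=r(A,\bt)$.
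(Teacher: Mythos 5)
Your reduction of everything to part (1) mirrors the paper's structure, and your inductive derivation of (2) from (1) is the standard one. But your argument for part (1) itself has a genuine gap, and you flag it yourself: after extracting the sets $E_k=B_{n_k}\setminus\bigcup_{j<n_k}B_j$ you announce that ``the main obstacle is making this obstruction precise'' --- namely verifying $A_0\notin\CS(\CH(E_1))$ while tracking the $\oplus\,\CE_{reg}$ correction in the definition of $\CS(\cdot)$ --- and you never carry that verification out. That verification is the entire content of the lemma: one would have to show that for every $n$ either $B_n\notin\CH(E_1)$ or some earlier $B_m\notin\CH(E_1)\oplus\CE_{reg}$, and nothing in your sketch controls which sets lie in $\CH(E_1)\oplus\CE_{reg}$. (Your preliminary claim that every $\af$ with $r(A_0,\af)\neq\emptyset$ is an initial segment of $\bar{\bt}$ also silently assumes the extension to length $(q+1)|\bt|$ exists, i.e.\ that some vertex of $r(A_0,\af)$ is not a sink.)

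The paper closes this gap by aiming the contradiction differently, and the difference matters. Instead of showing $A_0\notin\CS(\CH(E_1))$, it shows $r(\bt^r)\notin\CH(A_0)$ for every $r\geq 1$: if $r(\bt^r)\subseteq\bigcup_{i=1}^{m}r(A_0,\bt^i)$, then for all large $n$ the absorption $r(A_0,\bt^n)=r(r(A_0,\bt^{n-r}),\bt^r)\subseteq r(\bt^r)$ would force $r(A_0,\bt^n)\subseteq\bigcup_{j=1}^{n-1}r(A_0,\bar{\bt}_{[1,j]})$, contradicting the assumed failure of (1). Since every relative range out of $r(\bt^r)$ along a segment of $\bar{\bt}$ is again some $r(\bar{\bt}_{[1,m]})$, which cannot lie in the hereditary set $\CH(A_0)$ either, the ``for all $\gamma\in\CL^n(E)$'' clause in the description of $\CS(\CH(A_0))$ fails for every $n$, so $r(\bt^r)\notin\CS(\CH(A_0))=\CE$ --- the desired contradiction with Theorem \ref{equiv:minimal}. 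This route never has to analyze $\CH(E_1)$ or the regular correction at all, which is exactly where your version stalls; I recommend rewriting part (1) along these lines.
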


\begin{proof}(1): Note first that 
\begin{align}\label{mod} r(A_0, \bar{\bt}_{[1,j]}) \cap r(A_0, \bar{\bt}_{[1,k]}) \neq \emptyset \implies j=k ~(\text{mod} |\bt|).
\end{align}
Assume to the contrary that 
$r(A_0, \bar{\bt}_{[1,n]}) \setminus \cup_{j=1}^{n-1}r(A_0, \bar{\bt}_{[1,j]}) \neq \emptyset$
for infinitely many $n \geq 1$. Then by (\ref{mod}), 
 \begin{align}\label{assumption} r(A_0, \bt^n) \not\subset\cup_{j=1}^{n-1}r(A_0, \bt^j) \end{align}
for infinitely many $n \geq 1$. We then claim that $r(\bt^r) \notin \CH(A_0)$ for all $r \geq 1$. If $r(\bt^r) \in \CH(A_0)$ for some $r \geq1$, then $r(\bt^r)  \subset \cup_{i=1}^k r(A_0, \bar{\bt}_{[1,m_i]})$  for some $m_1, \cdots, m_k \geq 1$. Then, for each $i$, $m_i=k_i|\bt|$ for some $k_i \geq 1$. Take $m:=\max_i\{k_i\}$. Then we have $r(\bt^r) \subset \cup_{i=1}^m r(A_0,\bt^i)$.  But, then for all sufficiently large $n > m|\bt|$, 
$$r(A_0, \bt^n) = r(r(A_0, \bt^{n-r}), \bt^r) \subset r(\bt^r) \subset \cup_{i=1}^m r(A_0,\bt^i) \subset \cup_{j=1}^{n-1}r(A_0, \bar{\bt}_{[1,j]}),$$ which contradicts to (\ref{assumption}). Thus, $r(\bt^r) \notin \CH(A_0)$ for all $r \geq 1$. It then follows that $r(\bt^r) \notin \CS(\CH(A_0))$ for all $r \geq 1$. But, this is not the case since the minimlity of $(E, \CL,\CE)$ implies $\CS(\CH(A_0))=\CE$. 

(2): Follows by \cite[Lemma 3.6 (i)]{JP2018}.

(3): Follows by \cite[Lemma 3.6 (ii)]{JP2018}.
\end{proof}

\begin{remark} For a set-finite and receiver set-finite labeled space $(E, \CL,\CE)$ with  $E$ having no sinks or sources, it is shown in  \cite[Theorem 3.7]{JP2018} that  if $C^*(E, \CL,\CE)$ is simple, then $(E, \CL,\CE)$ is disagreeable. Thus, if $C^*(E, \CL,\CE)$ is simple, the labeled space $(E, \CL,\CE)$ can not have a nonempty set $A\in \CE$ and an irreducible path $\bt \in \CL^*(E)$ such that $\CL(AE^{n|\bt|})=\{\bt^n\}$ for all $n \geq 1$. But, a minimal labeled space can have a nonempty set $A\in \CE$ and an irreducible path $\bt \in \CL^*$ such that $\CL(AE^{n|\bt|})=\{\bt^n\}$ for all $n \geq 1$. See the following labeled graph $(E, \CL)$:
\vskip 1pc 
 \hskip 0.5cm
\xy  /r0.3pc/:(-44.2,0)*+{\cdots};(44.2,0)*+{\cdots.};
(-40,0)*+{\bullet}="V-4";
(-30,0)*+{\bullet}="V-3";
(-20,0)*+{\bullet}="V-2";
(-10,0)*+{\bullet}="V-1"; (0,0)*+{\bullet}="V0";
(10,0)*+{\bullet}="V1"; (20,0)*+{\bullet}="V2";
(30,0)*+{\bullet}="V3";
(40,0)*+{\bullet}="V4";
 "V-4";"V-3"**\crv{(-40,0)&(-30,0)};
 ?>*\dir{>}\POS?(.5)*+!D{};
 "V-3";"V-2"**\crv{(-30,0)&(-20,0)};
 ?>*\dir{>}\POS?(.5)*+!D{};
 "V-2";"V-1"**\crv{(-20,0)&(-10,0)};
 ?>*\dir{>}\POS?(.5)*+!D{};
 "V-1";"V0"**\crv{(-10,0)&(0,0)};
 ?>*\dir{>}\POS?(.5)*+!D{};
 "V0";"V1"**\crv{(0,0)&(10,0)};
 ?>*\dir{>}\POS?(.5)*+!D{};
 "V1";"V2"**\crv{(10,0)&(20,0)};
 ?>*\dir{>}\POS?(.5)*+!D{};
 "V2";"V3"**\crv{(20,0)&(30,0)};
 ?>*\dir{>}\POS?(.5)*+!D{};
 "V3";"V4"**\crv{(30,0)&(40,0)};
 ?>*\dir{>}\POS?(.5)*+!D{};
 (-35,1.5)*+{a};(-25,1.5)*+{a};
 (-15,1.5)*+{a};(-5,1.5)*+{a};(5,1.5)*+{a};
 (15,1.5)*+{a};(25,1.5)*+{a};(35,1.5)*+{a};
 \endxy 
\vskip 1pc 
\noindent Then the smallest non-degenerate accommodating  set is $\CE=\{\emptyset, r(a)\}$.  It is easy to see that $(E, \CL,\CE)$ is minimal and $\CL(r(a)E^n)=\{a^n\}$ for all $n \geq 1$. 
\end{remark}

We close this subsection with the following, which will be used to prove Theorem \ref{equiv:simple}.


\begin{prop}\label{cycle at a minimal set} Let $(E,\CL,\CE)$ be a minimal labeled space. If $(\af,A)$ is a cycle with no exits, then $A$ is a minimal set. 
\end{prop}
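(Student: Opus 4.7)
I will argue by contradiction. Suppose $A$ is not minimal, so there exists $A'\in\CE$ with $\emptyset\neq A'\subsetneq A$; then, by non-degeneracy of $\CE$, $A'':=A\setminus A'\in\CE$ is also nonempty. Both $A'$ and $A''$ lie in $A$, so the cycle hypothesis gives $r(A',\af)=A'$ and $r(A'',\af)=A''$. Iterating the no-exit condition, each $A'_s:=r(A',\af_{[1,s]})$ and $A''_s:=r(A'',\af_{[1,s]})$ is nonempty, regular, and emits only the label $\af_{s+1}$, for $0\le s<|\af|$; in particular $r(A'',\bt)=\emptyset$ unless $\bt$ is a prefix of $\overline{\af}:=\af\af\af\cdots$.

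Next I would exploit the minimality hypothesis: by Theorem~\ref{equiv:minimal}, $\CS(\CH(A'))=\CE$, so $A''\in\CS(\CH(A'))$. Lemma~\ref{hereditary saturated set} then supplies an integer $n\ge 0$ with $r(A'',\bt)\in\CH(A')$ for every $\bt\in\CL^n(E)$. The rigidity of the preceding paragraph reduces this to the single nonempty instance $r(A'',\overline{\af}_{[1,n]})$; writing $n=q|\af|+s$ with $0\le s<|\af|$ and using $r(A'',\af)=A''$, this range equals $A''_s$. Because $\CH(A')$ is closed under relative ranges and $r(A''_s,\af_{[s+1,|\af|]})=r(A'',\af)=A''$, I conclude $A''\in\CH(A')$.

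The nonempty relative ranges of $A'$ are precisely the sets $A'_s$, so $\CH(A')=\{B\in\CE:B\subseteq\bigcup_{s=0}^{|\af|-1}A'_s\}$. Hence $A''\subseteq\bigcup_s A'_s$, and intersecting with $A$ gives $A''\subseteq\bigcup_s(A\cap A'_s)$. Since $A\cap A'_0=A'$ is disjoint from $A''$, the proof reduces to establishing $A\cap A'_s=\emptyset$ for every $s\neq 0$; this would force $A''\subseteq A'$, contradicting $A'\cap A''=\emptyset$ and $A''\neq\emptyset$.

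The main obstacle is this last assertion. The plan is to combine the weakly left-resolving identity $r(A\cap A_s,\af)=r(A,\af)\cap r(A_s,\af)$ with the no-exit constraint: any nonempty $C\subseteq A\cap A_s$ must simultaneously emit $\af_1$ (as a subset of $A$) and $\af_{s+1}$ (as a subset of $A_s=r(A,\af_{[1,s]})$), forcing the cyclic matching $\af_j=\af_{j+s}$ for all $j$; pushing this periodicity around the cycle and reapplying weakly left-resolving to the matched labels yields $A\cap A_s=\emptyset$, whence $A\cap A'_s\subseteq A\cap A_s=\emptyset$ for $s\neq 0$. Making this cyclic-matching argument rigorous, and ruling out any pathological overlap that might escape it, is where the technical effort lies.
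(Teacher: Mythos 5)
Your overall strategy runs parallel to the paper's: both use minimality through Theorem~\ref{equiv:minimal} to place one piece of $A$ inside $\CS(\CH(\cdot))$ of another piece, and both exploit the fact that on an exit-free cycle the only nonempty relative ranges of a subset of $A$ are the shifts $r(\cdot,\af_{[1,s]})$, so that membership in the saturation collapses to membership in $\CH(\cdot)$. (The paper argues directly that any nonempty $B\subseteq A$ satisfies $r(A,\af_{[1,i]})\subseteq r(B,\af_{[1,i]})$ and hence $A\subseteq B$, rather than by contradiction with a partition $A=A'\cup A''$; that difference is cosmetic.) Your steps up to and including $A''\subseteq\bigcup_{s}A'_s$ are correct.

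However, the argument is incomplete at exactly the point you flag: the disjointness $A\cap r(A,\af_{[1,s]})=\emptyset$ for $0<s<|\af|$ is the crux, and you only sketch a plan for it. The paper does not prove this from scratch either --- it invokes it as \cite[Lemma 6.1]{CaK1}, in the form $r(A,\af_{[1,i]})\cap r(A,\af_{[1,j]})=\emptyset$ for $i\neq j$ --- so this is a substantive external ingredient, not a routine verification one may defer. Worse, the cyclic-matching argument you outline cannot close the gap as stated: a nonempty $C\subseteq A\cap r(A,\af_{[1,s]})$ forces, via the no-exit condition and weak left-resolving, only the cyclic periodicity $\af_j=\af_{j+s}$, i.e.\ that $\af$ is a power of a shorter word. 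Since the proposition does not assume $\af$ irreducible, this is not yet a contradiction: nothing in your sketch rules out, say, $\af=\gamma^2$ with $A\cap r(A,\gamma)\neq\emptyset$ (indeed for a single vertex with one loop labelled $a$ and $\af=aa$ one has $A=r(A,\af_{[1,1]})$). To finish, you would need either to pass to a primitive root of $\af$ and verify that it still yields an exit-free cycle on $A$ to which the rest of your argument applies, or to invoke the disjointness lemma from \cite{CaK1} as the paper does.
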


\begin{proof}  Let $(\af,A)$ be a cycle with no exits. Say $\af=\af_1 \cdots \af_n \in \CL^*(E)$.   We show that if $B \in \CE$ such that $ \emptyset \neq B \subseteq A$, then $B=A$. Since $(E,\CL,\CE)$ is  minimal, we have $\CS(\CH(B))=\CE$ by Theorem \ref{equiv:minimal}. Thus, $A \in \CS(\CH(B))$, and hence, for each $1 \leq i \leq n$,  we have $$r(A,\af_{[1,i]})\subseteq B \cup r(B,\af_1) \cup r(B ,\af_{[1,2]}) \cup \cdots \cup r(B, \af_{[1,n-1]})$$
since $(\af,A)$ and $(\af,B)$ are both cycle without exits.
On the other hand, since $r(A, \af_{[1,i]})\cap r(A, \af_{[1,j]}) =\emptyset$ for $i \neq j$ by \cite[Lemma 6.1]{CaK1} and $r(B, \af_{[1,j]}) \subseteq r(A, \af_{[1,j]})$ for each $1 \leq j \leq n$, it follows that 
$r(A, \af_{[1,i]})\cap r(B, \af_{[1,j]}) =\emptyset$ for $i \neq j$. 
Thus, we have  $r(A, \af_{[1,i]}) \subseteq r(B , \af_{[1,i]})$ for each $1 \leq i \leq n$. It then follows that 
$$A=r(r(A, \af_{[1,i]}) , \af_{[i+1,n]}) \subseteq r( r(B , \af_{[1,i]}) , \af_{[i+1,n]})=B.$$
Hence, $B=A$. 
\end{proof}

\subsection{Disagreeable labeled spaces} 
A notion of disagreeablity of set-finite and  receiver set-finite labeled space $(E, \CL,\CE)$ with  $E$ having no sinks or sources
 was introduced in \cite[Definition 5.1]{BP2} as another analogue notion of Condition (L) of directed graphs. 
If we briefly recall it, 
a labeled path $\af \in \CL([v]_l E^{\geq 1})$  is 
called {\it agreeable} for $[v]_l$ if 
$\af=\bt\af'=\af'\gm$ for some $\af',\bt,\gm \in \CL^*(E)$ 
with $|\bt|=|\gm| \leq l$.
Otherwise $\af$ is called {\it disagreeable}. Note that any  path  $\af$ agreeable for $[v]_l$ 
must be of the form $\af=\bt^k\bt'$ for 
some $\bt\in \CL(E^{\leq l})$,  $k\geq 0$, 
and an initial path $\bt'$ of $\bt$.
 We say that $[v]_l$ is {\it disagreeable} if there is an $N > 0$ 
 such that for all $n > N$ there is an 
 $\af \in \CL(E^{ \geq n})$ that is disagreeable for $[v]_l$. 
A labeled space $(E,\CL, \CE)$ is {\it disagreeable}  
 if  $[v]_l$ is disagreeable for all $v\in E^0$ and $l\geq 1$.

 In \cite{JKP}, they found its equivalent simpler  conditions.  
  
\begin{prop}(\cite[Proposition 3.2]{JKaP}) \label{prop-disagreeable}
For a set-finite and  receiver set-finite labeled space $(E,\CL,\CE)$ with $E$ having no sinks, the following are equivalent:
\begin{enumerate}
\item[(a)] $(E,\CL,\CE)$ is disagreeable.  
\item[(b)] $[v]_l$ is disagreeable for all $v\in E^0$ and $l\geq 1$.
\item[(c)] For each nonempty $A\in \CE$ and a path $\bt\in \CL^*(E)$, 
there is an $n\geq 1$ such that $\CL(AE^{|\bt|n})\neq \{\bt^n\}$.
\end{enumerate}
\end{prop}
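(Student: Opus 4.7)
The equivalence (a)$\Leftrightarrow$(b) is immediate from the definition of a disagreeable labeled space, so the real content is (b)$\Leftrightarrow$(c), which I would establish by contraposition in both directions.

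For (b)$\Rightarrow$(c), assume (c) fails, so there exist a nonempty $A\in\CE$ and $\bt\in\CL^*(E)$ with $\CL(AE^{|\bt|n})=\{\bt^n\}$ for every $n\geq 1$. In the set-finite and receiver set-finite setting with no sinks, every nonempty element of $\CE$ contains a generalized vertex, so I pick $v\in A$ and choose $l\geq|\bt|$ large enough that $[v]_l\subseteq A$. The absence of sinks forces $\CL([v]_l E^m)\neq\emptyset$ for every $m$, and combining this with the hypothesis on $A$ and truncation of paths of length $|\bt|n\geq m$ yields $\CL([v]_l E^m)=\{\bar\bt_{[1,m]}\}$ for all $m\geq 1$. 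Writing $m=k|\bt|+j$ with $0\leq j<|\bt|$, every such labeled path has the agreeable form $\bt^k\bt_{[1,j]}$ with $|\bt|\leq l$, so $[v]_l$ is not disagreeable and (b) fails.

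For (c)$\Rightarrow$(b), suppose $[v]_l$ is not disagreeable for some $v$ and $l$; then there are arbitrarily large $n$ for which every $\af\in\CL([v]_l E^{\geq n})$ is of the form $\bt(\af)^{k(\af)}\bt'(\af)$ with $|\bt(\af)|\leq l$. The crucial point is that $\bt(\af)$ coincides with the initial segment of $\af$ of length $|\bt(\af)|$, and by set-finiteness the collection $\CL([v]_l E^{\leq l})$ of possible initial segments is finite. A pigeonhole argument, together with K\"onig's lemma to produce an infinite labeled path from $[v]_l$, extracts a single period $\bt$ of length $\leq l$ and a sub-generalized-vertex $[w]_{l'}\subseteq [v]_l$ satisfying $\CL([w]_{l'}E^{|\bt|n})=\{\bt^n\}$ for all $n\geq 1$; taking $A=[w]_{l'}\in\CE$ then contradicts (c).

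The main obstacle lies in this last direction: producing a \emph{single} period $\bt$ that serves uniformly for all sufficiently long paths from one generalized vertex is more than a formal rearrangement of definitions. It requires combining Fine--Wilf-type periodicity arguments with both the set-finite and receiver set-finite hypotheses to rule out the coexistence of genuinely distinct periods among the finitely many agreeable continuations, and the passage to a finer generalized vertex $[w]_{l'}$ is what guarantees that only the single infinite path $\bar\bt$ is emitted.
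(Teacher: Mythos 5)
The paper offers no proof of this proposition at all: it is quoted verbatim from \cite[Proposition 3.2]{JKaP}, so there is nothing in the text to compare your argument against line by line; the question is only whether your proposal would stand on its own. The easy parts do. The equivalence (a)$\Leftrightarrow$(b) is literally the definition recalled from \cite{BP2}, and your contrapositive for (b)$\Rightarrow$(c) is sound: every nonempty $A\in\CE$ is a finite Boolean combination of ranges $r(\gm)$ (this uses weak left-resolving and normality and deserves a sentence), hence is a union of $\sim_l$-classes for $l$ at least the lengths of the $\gm$'s involved, so $[v]_l\subseteq A$ for suitable $v$ and $l\geq|\bt|$; absence of sinks then upgrades $\CL(AE^{|\bt|n})=\{\bt^n\}$ to $\CL([v]_lE^m)=\{\bar\bt_{[1,m]}\}$ for every $m$, and each such word is agreeable for $[v]_l$.

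The genuine gap is in (c)$\Rightarrow$(b), and you flag it yourself as ``the main obstacle'' without resolving it. Knowing that every sufficiently long $\af\in\CL([v]_lE^{\geq n_0})$ has the agreeable form $\bt(\af)^{k(\af)}\bt'(\af)$ with $|\bt(\af)|\leq l$ gives a period that varies with $\af$; your proof must produce one set $A\in\CE$ and one word $\bt$ with $\CL(AE^{|\bt|n})=\{\bt^n\}$ for all $n$, and the pigeonhole/Fine--Wilf/K\"onig sentence you offer is a description of a strategy, not an argument. To close it you need, concretely: (i) by set-finiteness and no sinks the labeled-path tree of $[v]_l$ is infinite and finitely branching, so K\"onig's lemma yields $x$ with $x_{[1,m]}\in\CL([v]_lE^m)$ for all $m$; (ii) the minimal period $p_m$ of $x_{[1,m]}$ is non-decreasing in $m$ and is bounded by $l$ once $m\geq n_0$, so it stabilizes and $x=\bar\bt$ with $|\bt|\leq l$ --- this monotonicity observation, not Fine--Wilf per se, is what kills the ``coexistence of distinct periods''; (iii) the periodic words arising this way are determined by their first $l$ letters, hence finitely many, say $\bar\bt_1,\dots,\bar\bt_r$, and every member of $\CL([v]_lE^m)$ with $m\geq n_0$ is a prefix of one of them; (iv) choosing $M$ a multiple of $|\bt_1|$ so large that $\bar{\bt_1}_{[1,M]}$ is a prefix of no other $\bar\bt_i$, the set $A:=r([v]_l,\bar{\bt_1}_{[1,M]})$ is nonempty and satisfies $\CL(AE^{|\bt_1|n})=\{\bt_1^n\}$ for all $n$. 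Step (iv) is also where receiver set-finiteness actually enters, since it is what guarantees $[v]_l\in\CE$ and hence $A\in\CE$; your appeal to a ``sub-generalized-vertex $[w]_{l'}$'' can be made to work but likewise rests on this unproved membership. As written, the proposal proves only one of the two nontrivial implications.
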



   Motivated by Proposition \ref{prop-disagreeable}(c), we define a notion of disagreeability of arbitrary labeled spaces as follows.
   
   \begin{dfn}\label{disagreeable} We say a labeled space $(E,\CL,\CE)$ is {\it disagreeable} if for any nonempty set $A\in \CE$  and a path $\bt\in \CL^*(E)$, 
there is an $n\geq 1$ such that $\CL(AE^{|\bt|n})\neq \{\bt^n\}$.
   \end{dfn}



\begin{dfn}(\cite[Definition 3.2]{JKK}) Let $(E, \CL,  \CE)$ be a  labeled space and  $\af \in \CL^*(E)$ and  
 $\emptyset\neq A \in  \CE$. 
 \begin{enumerate}
 \item  $\af$ is called a {\it loop} at $A$ 
 if  $A \subseteq r(A,\af)$. 
 \item 
A loop  $(\af,A)$ has an {\it exit} 
if one of the following holds:
\begin{enumerate}
\item[(i)] $\{\af_{[1,k]}: 1 \leq k \leq |\af|\} \subsetneq \CL(AE^{\leq |\af|})$.
\item[(ii)] $A\subsetneq r(A,\af)$.
\end{enumerate} 
 \end{enumerate} 

\end{dfn}

By \cite[Proposition 3.7]{JKaP}, one can see that 
 if $(E, \CL,\CE)$ is  disagreeable, then every loop in $(E, \CL,\CE)$ has an exit, and hence, $(E, \CL,\CE)$   satisfies Condition (L). It is shown in \cite[Proposition 3.2]{JP2018} that the other implications are not true, in general. But, if  $(E, \CL,\CE)$ is minimal,  these conditions are equivalent as we see in the following. 

\begin{lem}\label{minimal:prop3}(\cite[Proposition 3.2]{JP2018}) Consider the following three conditions of  a labeled space  $(E,\CL,\CE)$. 
\begin{enumerate}
\item $(E, \CL,\CE)$ is  disagreeable.
\item Every loop in $(E, \CL,\CE)$ has an exit.
\item $(E, \CL,\CE)$   satisfies Condition (L), that is, every cycle has an exit.
\end{enumerate}
Then we have (1)$\implies$(2)$\implies$(3). If, in addition, $(E,\CL,\CE)$ is minimal, (1)-(3) are equivalent. 
\end{lem}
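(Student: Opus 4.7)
My approach is to prove (1)$\Rightarrow$(2)$\Rightarrow$(3) unconditionally and close the loop via (3)$\Rightarrow$(1) under minimality by contrapositive.

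For (1)$\Rightarrow$(2), if $(\af,A)$ is a loop with no exit then negating both exit conditions gives $r(A,\af)=A$ and $\CL(AE^{\leq|\af|})=\{\af_{[1,k]}:1\leq k\leq|\af|\}$; iterating these identities yields $\CL(AE^{n|\af|})=\{\af^n\}$ for every $n\geq 1$, directly contradicting Definition \ref{disagreeable} taken with $\bt=\af$. For (2)$\Rightarrow$(3), a cycle $(\af,A)$ with no exit in the sense of \cite[Definition 9.5]{COP} satisfies $r(A,\af)=A$ (cycle condition at $B=A$) together with $\CL(AE^k)=\{\af_{[1,k]}\}$ for every $k\leq|\af|$ (by inducting on $t=0,1,\ldots,|\af|-1$ in the no-exit condition); hence $(\af,A)$ is a loop with no exit, contradicting (2).

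For (3)$\Rightarrow$(1) under minimality I argue by contrapositive. If $(E,\CL,\CE)$ is not disagreeable, fix a nonempty $A_0\in\CE$ and $\bt\in\CL^*(E)$ with $\CL(A_0E^{n|\bt|})=\{\bt^n\}$ for every $n\geq 1$. After passing to the primitive root of $\bt$ (and adjusting $A_0$ by a forward image if necessary) we may take $\bt$ irreducible, so Lemma \ref{loop}(3) produces $A:=\bigcup_{j=1}^{N_0}r(A_0,\bt^j)\in\CE$ satisfying $r(A,\bt)=A$. The plan is to exhibit a cycle with no exit supported inside $A$, thereby contradicting Condition (L).

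The no-exit conditions for any such candidate are essentially automatic from the construction: every vertex of $A$ (and of each $r(A,\bt_{[1,t]})$) is the source of a $\bt^\infty$-continuation through $A_0$, hence is not a sink, and the only available label at that step is $\bt_{t+1}$, so every nonempty $B\in\CE$ contained in $r(A,\bt_{[1,t]})$ satisfies $\CL(BE^1)=\{\bt_{t+1}\}$ and belongs to $\CE_{reg}$. The main obstacle is securing the cycle property $r(B,\af)=B$ for every nonempty $B\in\CE$ with $B$ in the invariant set, for an appropriate power $\af=\bt^m$. Here minimality is decisive: by Theorem \ref{equiv:minimal} one has $\CS(\CH(B))=\CE$, so $A\in\CS(\CH(B))$; because the only nonempty forward ranges from $A$ are $r(A,\bt_{[1,j]})$ with $0\leq j<|\bt|$ (using $r(A,\bt^k)=A$), the description of $\CS(\CH(B))$ in Lemma \ref{hereditary saturated set} collapses to $A\in\CH(B)$, i.e.\ $A\subseteq\bigcup_i r(B,\af_i)$ for some $\af_i\in\CL^\#(E)$. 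Irreducibility of $\bt$, which via a variant of \cite[Lemma 6.1]{CaK1} forces $A\cap r(A,\bt_{[1,j]})=\emptyset$ for $0<j<|\bt|$, discards those $\af_i$ whose length is not a multiple of $|\bt|$ and reduces the cover to $A\subseteq\bigcup_k r(B,\bt^k)$. Iterating this analysis on $B$ and on its forward images $r(B,\bt),r(B,\bt^2),\ldots$ and stabilizing along the forward $\bt$-orbit yields a minimal $\bt^m$-invariant subset $A^*\subseteq A$ on which $r(\cdot,\bt^m)$ is the identity on $\CE\cap 2^{A^*}$; the pair $(\bt^m,A^*)$ is then the desired cycle with no exit, violating (3).
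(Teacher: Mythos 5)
Your (1)$\implies$(2)$\implies$(3) is correct and is essentially what the paper imports from \cite[Proposition 3.7]{JKaP}, and the setup of your (3)$\implies$(1) --- contrapositive, reduction to an irreducible $\bt$, Lemma \ref{loop}(3) producing $A=\cup_{j=1}^{N_0}r(A_0,\bt^j)$ with $r(A,\bt)=A$ --- is exactly the paper's. The gap is in how you close. The paper's route is to show that $A$ itself is a \emph{minimal set} of $\CE$ (i.e.\ $A\cap B\in\{\emptyset,A\}$ for every $B\in\CE$), deferring to the proof of \cite[Theorem 3.7]{JP2018}; once $A$ is a minimal set, the cycle condition ``$B=r(B,\bt)$ for every $\emptyset\neq B\subseteq A$'' collapses to the single already-established identity $A=r(A,\bt)$, and the no-exit conditions are checked on very few sets. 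You instead try to verify the cycle condition for arbitrary $B\subseteq A$ directly, and your decisive step --- ``iterating this analysis on $B$ and on its forward images \dots and stabilizing along the forward $\bt$-orbit yields a minimal $\bt^m$-invariant subset $A^*$'' --- is asserted rather than proved. Nothing guarantees that this iteration stabilizes: $\CE$ is merely a Boolean algebra with no chain condition, a minimal invariant subset need not exist, and you give neither a construction of $A^*$ nor a reason it belongs to $\CE$. This is precisely the content that minimality of the set $A$ supplies in the paper, so your argument has a hole at its crux. The ingredients you assembled (the cover $A\in\CH(B)$ from $\CS(\CH(B))=\CE$, the disjointness of the $r(A,\bt_{[1,j]})$ via irreducibility) are the right ones, but they should be aimed at proving $A$ is a minimal set, in the style of Proposition \ref{cycle at a minimal set}, not at manufacturing $A^*$.

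A secondary problem: your claim that the no-exit conditions are ``essentially automatic from the construction'' because every vertex of $r(A,\bt_{[1,t]})$ is the source of a $\bt^\infty$-continuation does not follow from $\CL(A_0E^{n|\bt|})=\{\bt^n\}$ for all $n$. That hypothesis only says the \emph{set} of labels of length-$n|\bt|$ paths emitted by $A_0$ is $\{\bt^n\}$; an individual vertex of $r(A_0,\bt^j)$ may be a sink, and a nonempty $B\in\CE$ contained in $r(A,\bt_{[1,t]})$ may then fail to lie in $\CE_{reg}$ or satisfy $\CL(BE^1)=\{\bt_{t+1}\}$, in which case the candidate cycle would in fact have an exit. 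This, too, is what the minimality of $A$ is needed to rule out.
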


\begin{proof} We only need to show that (3)$\implies$(1) when $(E, \CL,\CE)$ is minimal: Let  $(E, \CL,\CE)$ be minimal. Suppose that $(E, \CL,\CE)$ is not disagreeable. Then there exist a nonempty set $A_0 \in \CE$  and a path $\bt \in \CL^*(E)$ such that for all $n \geq 1$, 
$$\CL(A_0E^{|\bt|n})=\{\bt^n\},$$
where we assume that $\bt$ is irreducible. Then by Lemma \ref{loop}, there is an $N \geq 1$ such that for all $k \geq 1$, 
\begin{align}\label{aa}r(A_0, \bt^{N+k}) \subseteq \cup_{j=1}^N r(A_0, \bt^j).
\end{align}
Take $A:=\cup_{i=1}^N r(A_0,\bt^j)$. Then $A=r(A, \bt)$.  One then can show that $A$ is a minimal set since $(E,\CL,\CE)$ is minimal (see the proof of \cite[Theorem 3.7]{JP2018}). Thus, $(\bt, A)$ is a cycle with no exits. 
Thus, $(E, \CL,\CE)$  does not satisfy Condition (L). 
\end{proof}

 \subsection{Simplicity} We now characterize the simplicity of the $C^*$-algebra associated to an arbitrary  labeled space $(E,\CL,\CE)$. 

 \begin{thm}\label{equiv:simple} Let $(E,\CL,\CE)$ be a labeled space. 
Then the following are equivalent.
\begin{enumerate}
\item $C^*(E,\CL,\CE)$ is simple.
\item  $(E, \CL,\CE)$ is minimal and  satisfies Condition (L).
\item  $(E, \CL,\CE)$ is minimal and  satisfies Condition (K).
\item The following properties hold:
\begin{enumerate}
\item $(E, \CL,\CE)$ is strongly cofinal,
\item $(E, \CL,\CE)$ satisfies Condition (L), and 
\item for any  $ A \in \CE \setminus \{\emptyset\}$ and $B \in \CE$, there is  $C \in \CE_{reg}$ such that  $B \setminus C\in \CH(A)$. 
\end{enumerate}
\item The following properties hold:
\begin{enumerate}
\item $(E, \CL,\CE)$ is strongly cofinal,
\item $(E, \CL,\CE)$ is disagreeable, and 
\item for any  $ A \in \CE \setminus \{\emptyset\}$ and $B \in \CE$, there is  $C \in \CE_{reg}$ such that  $B \setminus C\in \CH(A)$. 
\end{enumerate}
\end{enumerate}
 \end{thm}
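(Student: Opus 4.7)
The strategy is to establish the equivalences (2) $\Leftrightarrow$ (4) $\Leftrightarrow$ (5) $\Leftrightarrow$ (3) first by assembling the structural results already in the paper, and then close the loop by proving (1) $\Leftrightarrow$ (2). The equivalence (2) $\Leftrightarrow$ (4) is immediate from Theorem~\ref{equiv:minimal}, whose condition (3) is exactly (4)(a)-(c). Combining this with Lemma~\ref{minimal:prop3}, which says that under minimality the conditions \emph{Condition (L)} and \emph{disagreeable} coincide, gives (2) $\Leftrightarrow$ (5). For (2) $\Leftrightarrow$ (3) one direction is trivial since Condition (K) implies Condition (L). For the converse, Lemma~\ref{LK} says Condition (K) is equivalent to every quotient $(\CE/\CH,\CA,\theta)$ satisfying Condition (L); when $(E,\CL,\CE)$ is minimal the only hereditary saturated subsets of $\CE$ are $\{\emptyset\}$ and $\CE$, the quotient by $\CE$ is trivial, and the quotient by $\{\emptyset\}$ is the original system. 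So Condition (L) plus minimality yields Condition (K).

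The implication (2) $\Rightarrow$ (1) is a standard Cuntz--Krieger uniqueness argument. Let $I$ be a nonzero ideal and $q:C^*(E,\CL,\CE)\to C^*(E,\CL,\CE)/I$ the quotient map. If $p_A\in I$ for some nonempty $A\in\CE$, then Theorem~\ref{equiv:minimal}(4) forces $I=C^*(E,\CL,\CE)$. Otherwise $q(p_A)\neq 0$ for every nonempty $A\in\CE$, so $\{q(s_\af),q(p_A)\}$ is a representation in which all vertex projections are nonzero; by Condition (L) and Theorem~\ref{CKUT} the map $q$ is then injective, contradicting $I\neq 0$.

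The converse (1) $\Rightarrow$ (2) is where the real work lies. Simplicity immediately excludes nontrivial gauge-invariant ideals, so by Theorem~\ref{equiv:minimal}(5) the space $(E,\CL,\CE)$ is minimal. It remains to show Condition (L). Suppose for contradiction that $(\af,A)$ is a cycle with no exits. By Proposition~\ref{cycle at a minimal set} the set $A$ is minimal in $\CE$. Because the cycle has no exits, iterating relation (iv) of Definition~\ref{rep} along the letters of $\af$ gives $p_A=s_\af p_A s_\af^*$, and combined with $(s_\af p_A)^*(s_\af p_A)=p_A$ this shows that $u:=s_\af p_A$ is a unitary in the corner $p_A C^*(E,\CL,\CE)p_A$. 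The plan is then to exhibit a nontrivial ideal from $u$: using the gauge action $\gamma_z(u)=z^{|\af|}u$ (restricted to $p_A C^*(E,\CL,\CE) p_A$, which is gauge-invariant because $p_A$ is fixed) one concludes $\sigma(u)=\T$, so $C^*(u)\cong C(\T)$ sits inside the corner; the hereditary subalgebra generated by any proper spectral projection of $u$ generates a proper nontrivial ideal of $C^*(E,\CL,\CE)$, contradicting simplicity.

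The main obstacle is this last step: verifying rigorously that a cycle-without-exit in a minimal labeled space gives a nontrivial ideal rather than collapsing under the other relations. The cleanest route is to pass through Proposition~\ref{isom}, identifying $C^*(E,\CL,\CE)$ with $C^*(\CE,\CA,\theta,\CI_{r(\af)})$, and invoke the corresponding simplicity/uniqueness machinery for Boolean dynamical systems from \cite{CaK1,COP}, where the same $C(\T)$-corner obstruction is already packaged as a failure of Condition (L). An alternative, more self-contained route is to construct explicitly a $*$-homomorphism from $C^*(E,\CL,\CE)$ onto $C(\T)$ by sending $s_\af p_A\mapsto z$, $p_A\mapsto 1$ and $p_B\mapsto 0$ for $B\cap A=\emptyset$, extending via minimality of $A$; verifying that this respects relation (iv) for all regular $B\in\CE$ is precisely where the no-exit hypothesis is needed.
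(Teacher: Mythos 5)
Most of your proposal tracks the paper's proof exactly: (2)$\iff$(4) via Theorem~\ref{equiv:minimal}, (4)$\iff$(5) via Lemma~\ref{minimal:prop3}, (2)$\iff$(3) via Lemma~\ref{LK} (your unpacking of why minimality reduces the quotient condition to the original system is correct and slightly more explicit than the paper's), and (2)$\implies$(1) by the same two-case Cuntz--Krieger uniqueness argument using Theorem~\ref{CKUT} and Theorem~\ref{equiv:minimal}(4). The one place you diverge is the heart of (1)$\implies$(2), namely deriving a contradiction from a cycle $(\af,A)$ with no exits, and there your argument has a genuine gap.

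Your unitary $u=s_\af p_A$ and the gauge-action computation $\sm(u)=\T$ are fine, but the step ``the hereditary subalgebra generated by any proper spectral projection of $u$ generates a proper nontrivial ideal'' cannot work: since $\sm(u)=\T$ is connected, $C^*(u)\cong C(\T)$ contains no nontrivial projections at all, so there is no spectral projection to use. More importantly, merely exhibiting a copy of $C(\T)$ \emph{inside} the corner $p_AC^*(E,\CL,\CE)p_A$ contradicts nothing --- plenty of simple $C^*$-algebras contain unitaries with full spectrum. What you actually need is that the corner (equivalently, the hereditary subalgebra generated by $p_A$) is \emph{equal} to a non-simple algebra, since hereditary subalgebras of simple $C^*$-algebras are simple. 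That identification --- showing every compression $p_A s_\bt p_B s_\gm^* p_A$ collapses to a power of $u$, using the no-exit hypothesis and the minimality of $A$ furnished by Proposition~\ref{cycle at a minimal set} --- is precisely the content of the paper's citation of Lemma~4.6 of \cite{JKP}, which gives the hereditary subalgebra isomorphic to $M_{|\af|}(C(\T))$. You flag this yourself as ``the main obstacle'' and offer two escape routes (the Boolean-dynamical-system machinery of \cite{CaK1,COP}, or an explicit surjection onto $C(\T)$), but neither is carried out, and the surjection route in particular requires verifying relation (iv) of Definition~\ref{rep} for all regular sets meeting $A$, which is nontrivial. As written, the implication (1)$\implies$(2) is therefore not established; the rest of the proof is sound.
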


  \begin{proof}
  (1)$\implies$(2): If  $C^*(E,\CL,\CE)$ is simple, then 
 the only gauge-invariant ideal of $C^*(E,\CL,\CE)$ is $\{0\}$ and $C^*(E,\CL,\CE)$. Thus, $(E, \CL,\CE)$ is minimal by Theorem \ref{equiv:minimal}.  Suppose that $(E, \CL,\CE)$ does not satisfy Condition (L).  
  Then the labeled space has a cycle $(\af,A)$ with no exits. Since  $(E,\CL,\CE)$ is minimal, $A$ is a minimal set by Proposition \ref{cycle at a minimal set}. Then,  $C^*(E,\CL,\CE)$ has a hereditary subalgebra isomorphic to $M_{|\af|}(C(\T))$ by \cite[Lemma 4.6]{JKP}. It contradicts to  $C^*(E,\CL,\CE)$ is simple.

(2)$\implies$(1):   Let $I$ be a nonzero ideal of $C^*(E,\CL,\CE)$. Since $(E, \CL,\CE)$  satisfies Condition (L), $I$ contains a vertex projection $p_A$ for some $\emptyset \neq A \in \CE$ by the Cuntz-Krieger Uniqueness Theorem \ref{CKUT}. Then $I=C^*(E,\CL,\CE)$ by Theorem \ref{equiv:minimal}.
 
   (2)$\iff$(3):  Follows by  Lemma \ref{LK}.
  
  (2)$\iff$(4): Follows by Theorem  \ref{equiv:minimal}.
  
  (4)$\iff$(5): Follows by Lemma \ref{minimal:prop3}.  
      \end{proof}
 
 As a corollary, we have the following  simplicity results of labeled graph $C^*$-algebras associated to  set-finite labeled spaces with no sinks. 
It  is an improvement on \cite[Theorem 3.7]{JP2018}.             

 \begin{cor}\label{cor:simple} If $E$ has no sinks and $(E,\CL,\CE)$ is set-finite, then the following are equivalent.
\begin{enumerate}
\item $C^*(E,\CL,\CE)$ is simple.
\item   $(E, \CL,\CE)$ is minimal and  satisfies Condition (L).
\item  $(E, \CL,\CE)$ is minimal and  satisfies Condition (K).
\item[(4)] $(E, \CL,\CE)$ is strongly cofinal and  disagreeable.
\item[(5)]  $(E, \CL,\CE)$  is strongly cofinal and   satisfies Condition (L).
\item[(6)]  $(E, \CL,\CE)$  is strongly cofinal and   satisfies Condition (K).
\end{enumerate}
 \end{cor}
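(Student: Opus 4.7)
The plan is to deduce the corollary directly from Theorem \ref{equiv:simple}, using the hypotheses ($E$ has no sinks, $(E,\CL,\CE)$ is set-finite) to collapse the third condition in parts (4) and (5) of Theorem \ref{equiv:simple} and to identify minimality with strong cofinality.

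First, I would observe that under these hypotheses we have $\CE = \CE_{reg}$, as remarked in Subsection~2.2. This immediately makes condition (c) in parts (4) and (5) of Theorem \ref{equiv:simple} automatic: given $A \in \CE \setminus \{\emptyset\}$ and $B \in \CE$, taking $C := B \in \CE_{reg}$ gives $B \setminus C = \emptyset \in \CH(A)$. So in the set-finite, sink-free setting, Theorem \ref{equiv:simple}(4) collapses to the conjunction of strong cofinality and Condition (L), and similarly Theorem \ref{equiv:simple}(5) collapses to strong cofinality together with disagreeability.

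Next, I would invoke Corollary \ref{cor:minimal}, which tells us that under the standing hypotheses minimality is equivalent to strong cofinality. Combining this with Theorem \ref{equiv:simple}, the equivalences (1) $\iff$ (2) $\iff$ (3) of the corollary are immediate from (1) $\iff$ (2) $\iff$ (3) of Theorem \ref{equiv:simple}, and the equivalences (2) $\iff$ (5) and (3) $\iff$ (6) of the corollary follow by replacing ``minimal'' with ``strongly cofinal'' via Corollary \ref{cor:minimal}.

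It remains to handle (4). The implication disagreeable $\Rightarrow$ Condition (L) is part of Lemma \ref{minimal:prop3}, so (4) $\Rightarrow$ (5) of the corollary is automatic (strong cofinality is shared). Conversely, once (5) holds we have minimality (by Corollary~\ref{cor:minimal}), so the additional hypothesis needed for the nontrivial direction of Lemma \ref{minimal:prop3} is in place, and Condition (L) upgrades to disagreeability, giving (5) $\Rightarrow$ (4). I do not foresee any serious obstacle: the whole argument is a bookkeeping exercise assembling Theorem \ref{equiv:simple}, Corollary \ref{cor:minimal}, and Lemma \ref{minimal:prop3}, with the only subtle point being the observation $\CE = \CE_{reg}$ which trivialises condition (c).
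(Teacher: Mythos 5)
Your proposal is correct and follows essentially the same route as the paper, whose proof is the one-line citation of Corollary \ref{cor:minimal} and Theorem \ref{equiv:simple}; you have simply spelled out the bookkeeping (including the observation that $\CE=\CE_{reg}$ trivialises condition (c), and the use of Lemma \ref{minimal:prop3} for the disagreeable/Condition (L) equivalence under minimality).
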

 
 \begin{proof}It follows by Corollary \ref{cor:minimal} and Theorem \ref{equiv:simple}.
 \end{proof}
 
 \vskip 2pc
\subsection*{Acknowledgements}  The author wishes to many thank Toke Meier Carlsen for                                                             pointing out her mistake and having a helpful conversation.

\end{document}